\numberwithin{equation}{section}
\theoremstyle{plain}
\newtheorem{thm}{Theorem}[section]
\newtheorem{theorem}[thm]{Theorem}
\newtheorem{lemma}[thm]{Lemma}
\newtheorem{proposition}[thm]{Proposition}
\newcommand{\definedas}{\coloneqq}
\newcommand{\bR}{\mathbb{R}}
\newcommand{\bN}{\mathbb{N}}
\newcommand{\bL}{\mathbb{L}}
\newcommand{\bE}{\mathbf{E}}
\newcommand{\cY}{\mathcal{Y}}
\newcommand{\ghat}{{\widehat{g}}}
\newcommand{\Khat}{\widehat{K}}
\newcommand{\phitil}{\widetilde{\phi}}
\newcommand{\sigmatil}{\widetilde{\sigma}}
\newcommand{\Wtil}{\widetilde{W}}
\newcommand{\Cbar}{\overline{C}}
\newcommand{\Wbar}{\overline{W}}
\newcommand{\gbar}{\overline{g}}
\newcommand{\qbar}{\overline{q}}
\newcommand{\Wring}{\mathring{W}}
\newcommand{\Lring}{\mathring{L}}
\DeclareMathOperator{\tr}{tr}
\DeclareMathOperator{\divg}{div}
\DeclareMathOperator{\vol}{Vol}
\newcommand{\scal}{\mathrm{Scal}}
\newcommand{\ric}{\mathrm{Ric}}
\newcommand{\lie}{\mathcal{L}}
\newcommand{\scaltil}{\widetilde{\scal}}
\def\into{\hookrightarrow}
\DeclareMathOperator{\DeltaL}{\Delta_{\bL, \eta}}
\begin{document}

\title[Solutions to constraint equations with vanishing Yamabe invariant]
{Solutions to the Einstein constraint equations with a small
TT-tensor and vanishing Yamabe invariant}

\begin{abstract}
In this note we prove an existence result for the Einstein conformal
constraint equations for metrics with vanishing Yamabe invariant
assuming that the TT-tensor is small in $L^2$.
\end{abstract}

\author[R. Gicquaud]{Romain Gicquaud}
\address[R. Gicquaud]{
  Institut Denis Poisson\\
  Universit\'e de Tours\\
  Parc de Grandmont\\ 37200 Tours \\ FRANCE}
\email{romain.gicquaud@lmpt.univ-tours.fr}

\keywords{Einstein constraint equations, non-CMC, conformal method,
vanishing Yamabe invariant, small TT-tensor}

\subjclass[2010]{53C21 (Primary), 35Q75, 53C80, 83C05 (Secondary)}

\date{14th February, 2018}
\maketitle
\tableofcontents

\section{Introduction}
The conformal method and one of its generalization, the conformal thin
sandwich (CTS) method (described e.g. in \cite{BartnikIsenberg} or
\cite{MaxwellConformalMethod}) are historically the main methods to solve
the Einstein constraint equations, despite recent evidences that they fail
at parameterizing correctly the full set of initial data (see e.g.
\cite{MaxwellConformalParameterization, HolstMeierNonUniqueness,
DiltsHolstKozarevaMaxwell}).

Initial data for the Cauchy problem are generally given as a triple
$(M, \ghat, \Khat)$, where $M$ is a n-dimensional manifold, $\ghat$ is a
metric on $M$ and $\Khat$ is a symmetric 2-tensor that correspond respectively
to the metric induced by the spacetime (we are to find) metric on $M$ and
the second fundamental form of $M$ as a hypersurface in the spacetime. The
interested reader can consult e.g. \cite{RingstromCauchyProblem} for more
information.

The strategy of the conformal method and of the CTS method is to decompose
in a certain manner $(M, \ghat, \Khat)$ as a given part and an unknown part
that has to be adjusted in order to fulfill the constraint equations. To keep
things simple, we will consider only the vacuum case and restrict to compact
Cauchy surfaces $M$. We fix
\begin{itemize}
 \item a compact Riemannian manifold $(M, g)$ of dimension $n \geq 3$,
 \item a function $\tau: M \to \bR$,
 \item a symmetric traceless 2-tensor $\sigma$ such that $\divg_g \sigma = 0$
(such a tensor will be called a TT-tensor in what follows),
 \item a positive function $\eta: M \to \bR_+$,
\end{itemize}
and seek for
\begin{itemize}
 \item a positive function $\phi: M \to \bR_+$,
 \item a vector field $W$,
\end{itemize}
so that
\begin{equation}\label{eqDecomposition}
 \ghat \definedas \phi^{N-2} g\quad\text{and}\quad\Khat \definedas \frac{\tau}{n} \ghat + \phi^{-2} \left(\sigma + \frac{1}{2\eta}\bL W\right)
\end{equation}
satisfy the constraint equations
\begin{subequations}\label{eqConstraintEquations}
\begin{empheq}[left=\empheqlbrace]{align}
\label{eqHamiltonian}
\scal_\ghat + (\tr_\ghat \Khat )^2 - |\Khat|_\ghat^2 & = 0,\\
\label{eqMomentum}
\divg_\ghat \Khat - d (\tr_\ghat \Khat ) & = 0.
\end{empheq}
\end{subequations}
Here we have introduced the following notations:
\[
 N \definedas \frac{2n}{n-2}\quad\text{and}\quad \bL W \definedas \lie_W g - \frac{\tr_g \lie_W g}{n} g,
\]
where $\lie$ denotes the Lie derivative.
The operator $\bL$ is commonly known as the conformal Killing operator
or as the Alhfors operator. Note that $\tau = \ghat^{ij} \Khat_{ij}$ so
$\tau$ corresponds to the mean curvature of the embedding of $M$ into
the spacetime.

The decomposition \eqref{eqDecomposition} relies on York's splitting
of symmetric 2-tensors \cite{YorkDecomposition}. TT-tensors were introduced
first by R. Arnowitt, S. Deser and C. Misner in 1962 (see the reprint of
this article in \cite{ArnowittDeserMisner}). We refer the reader to
\cite{MaxwellConformalMethod} for more information about the history of the
conformal method and of the conformal thin sandwich.

The system \eqref{eqConstraintEquations} is equivalent to the following:
\begin{subequations}\label{system}
\begin{empheq}[left=\empheqlbrace]{align}
-\frac{4(n-1)}{n-2} \Delta_g \phi + \scal \phi &= - \frac{n-1}{n} \tau^2 \phi^{N-1} + \left|\sigma + \frac{1}{2\eta}\bL_g W \right|_g^2 \phi^{-N-1}, \label{eqLichnerowicz}\\
\DeltaL W &= \frac{n-1}{n} \phi^N \nabla\tau, \label{eqVector}
\end{empheq}
\end{subequations}
where we set
\[
 \DeltaL \definedas - \frac{1}{2} \bL^* \left(\frac{1}{2\eta} \bL \cdot\right).
\]
Equation \eqref{eqLichnerowicz} is commonly known as the Lichnerowicz
equation while Equation \eqref{eqVector} bears no particular name, we
will call it the vector equation. Hence, solving \eqref{system} is
equivalent to solving \eqref{eqConstraintEquations}.

The conformal method corresponds to the particular choice $2\eta \equiv 1$
in the previous equations. As indicated in \cite{BartnikIsenberg},
allowing for more general $\eta$ in \eqref{system} does not introduce
new technical difficulties so theoretical studies have mostly concentrated
on the conformal method.

Initial work was limited to the constant mean
curvature (CMC) case (i.e. constant $\tau$) and to the near-CMC case.
But two constructions were introduced in 2009 by M. Holst, G. Nagy, G. Tsogtgerel
and D. Maxwell (HNTM), see \cite{HNT1, HNT2, MaxwellNonCMC}, and in 2011 by M. Dahl,
E. Humbert and the author in \cite{DahlGicquaudHumbert} to solve \eqref{system}.
The interested reader can consult \cite{GicquaudNgo, Nguyen2} for an overview
and a comparison of both techniques.

We will focus on the HNTM method. It requires two things: that the Yamabe
invariant $\cY_g$ of $g$ is positive (see e.g. \cite{LeeParker} for the
definition of the Yamabe invariant and the solution of the related Yamabe
problem) and that $\sigma$ is non zero but
small in a certain sense (see also \cite{Nguyen}). This construction was
interpreted as perturbative in a non-trivial sense in \cite{GicquaudNgo}.
Despite the fact that the point of view introduced in \cite{GicquaudNgo} gives
a result weaker than the original one in \cite{MaxwellNonCMC}, it provides
a quick way to test whether the HNTM method works in more general
situations. This has been used in \cite{GicquaudNguyen} for the Einstein-scalar
field conformal method and in \cite{HolstMaxwellMazzeo} for variants of the
conformal method introduced by D. Maxwell in \cite{MaxwellInitialData}.

In this paper, we show that the HNTM construction extends to
the case $\cY_g = 0$ at the price of imposing a soft (explicit) condition
on $\tau$, see \eqref{eqCondition}. The main difficulty here is that the
conformal Laplacian
\[
 - \frac{4(n-1)}{n-2} \Delta_g + \scal
\]
has a 1-dimensional kernel so the behavior of $\phi$ in the direction of
this kernel is different than in the ($L^2$-)orthogonal direction.

The outline of the paper is as follows. Section \ref{secPrelim} contains
existence and uniqueness results for the Lichnerowicz equation and for the
vector equation in a weak regularity context.
Section \ref{secImplicit} follows the construction in \cite{GicquaudNgo}.
This gives an idea of what goes on and prepares for Section \ref{secSmallTT}
where we prove existence of solutions to \eqref{system} when $\sigma$ is small
in the spirit of \cite{MaxwellNonCMC, Nguyen, GicquaudNguyen}. 

The main difference between Theorem \ref{thmImplicit} and Theorem
\ref{thmSmallTT} is that, in \ref{thmImplicit}, we have no control on how
$\lambda_0$ depends on $\sigmatil$ so the theorem gives a weaker result existence,
yet the proof is based on the implicit function theorem so is constructive.
The proof of Theorem \ref{thmSmallTT} however is based on the Schauder
fixed point theorem and is non-constructive by essence.\\

\noindent\textbf{Acknowledgments:} 
The author is grateful to Cang Nguyen for useful discussion and careful
proofreading of the paper. He also thanks Stanley Deser for pointing the
reference \cite{ArnowittDeserMisner}. Last but not least, the author's
thoughts are with Jim Isenberg and his relatives after this tragic accident.

\section{Preliminaries}\label{secPrelim}
The aim of this section is to reprove well known existence results in a
weak regularity context. Here and in what follows, we fix a value $p > n$.

We let $\phi_0$ denote the unique positive function such that
$\|\phi_0\|_{L^2(M, \bR)} = 1$ and
\[
 -\frac{4(n-1)}{n-2} \Delta \phi_0 + \scal \phi_0 = 0.
\]

\begin{proposition}\label{propLichnerowicz}
 Given $g \in W^{2, p/2}(M, S_2M)$, $\tau \in L^p(M, S_2M)$ and $A \in L^p(M, \bR)$ both non zero,
there exists a unique positive function $\phi \in W^{2, p/2}(M, \bR)$
solving the Lichnerowicz equation 
\begin{equation}\label{eqLichnerowicz2}
-\frac{4(n-1)}{n-2} \Delta \phi + \scal \phi
= -\frac{n-1}{n} \tau^2 \phi^{N-1} + A^2 \phi^{-N-1}.
\end{equation}
Further the mapping $A \mapsto \phi$ is continuous from $L^p(M, \bR)$ to $W^{2, p/2}(M, \bR)$.
\end{proposition}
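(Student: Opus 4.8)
The plan is to reduce to the scalar-flat case and then apply the classical method of sub- and super-solutions, paying attention throughout to the fact that the coefficients $\tau^2$ and $A^2$ lie only in $L^{p/2}$. Since $\cY_g = 0$ (the standing assumption, without which $\phi_0$ would not be defined), the function $\phi_0$ is positive and, by elliptic regularity together with the embedding $W^{2,p/2}\hookrightarrow C^0$ (valid because $p>n$), continuous and bounded below by a positive constant, so $\phi_0^{N-2} g$ is scalar-flat. After the conformal change $g \mapsto \phi_0^{N-2} g$ and the substitution $\phi = \phi_0 \psi$, equation \eqref{eqLichnerowicz2} keeps the same form with $\scal$ replaced by $0$ and $A$ replaced by $\phi_0^{-N} A$, again an element of $L^p$ and again non-zero; since $\psi \mapsto \phi_0 \psi$ is a linear isomorphism of $W^{2,p/2}$ preserving positivity, it suffices to treat the case $\scal \equiv 0$. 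Write $P \definedas -\frac{4(n-1)}{n-2} \Delta$, a non-negative operator whose kernel consists of the constants; the equation to solve is then $P\phi + \frac{n-1}{n} \tau^2 \phi^{N-1} = A^2 \phi^{-N-1}$, where I keep the letter $A$ for the transformed coefficient.

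I construct ordered barriers in $W^{2,p/2}$. Since $\tau \not\equiv 0$, the quadratic form of $P + \frac{n-1}{n}\tau^2$ is positive definite, so this operator is invertible with positivity-preserving inverse; let $\rho > 0$ solve $(P + \frac{n-1}{n}\tau^2)\rho = 1$ (it is continuous, hence $\rho \geq \rho_0 > 0$) and set
\[
\phi_+ \definedas \Bigl( P + \frac{n-1}{n}\tau^2 \Bigr)^{-1}\!\bigl( A^2 + C \bigr), \qquad C \definedas 1/\rho_0,
\]
so that $\phi_+ \geq C\rho \geq 1$. Using $\phi_+ \geq 1$ and $N > 2$,
\[
P\phi_+ + \frac{n-1}{n}\tau^2 \phi_+^{N-1} = A^2 + C + \frac{n-1}{n}\tau^2\bigl(\phi_+^{N-1} - \phi_+\bigr) \ \geq\ A^2 \ \geq\ A^2 \phi_+^{-N-1},
\]
so $\phi_+$ is a supersolution. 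Since $A \not\equiv 0$, the monotone equation $P\phi_- + \frac{n-1}{n}\tau^2 \phi_-^{N-1} + \phi_- = \min(A^2,1)$ has, by the sub/super-solution method with the constant barriers $0$ and $1$ and the strong maximum principle, a solution with $0 < \phi_- \leq 1$; then
\[
P\phi_- + \frac{n-1}{n}\tau^2 \phi_-^{N-1} = \min(A^2,1) - \phi_- \ \leq\ A^2 \ \leq\ A^2 \phi_-^{-N-1},
\]
so $\phi_-$ is a subsolution, and $\phi_- \leq 1 \leq \phi_+$.

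Given these ordered barriers, a monotone iteration produces a solution: taking the zeroth-order shift $K \definedas \frac{(n-1)^2}{n}\tau^2 (\max\phi_+)^{N-2} + (N+1) A^2 (\min\phi_-)^{-N-2} + 1 \in L^{p/2}$, the map $u \mapsto Ku - \frac{n-1}{n}\tau^2 u^{N-1} + A^2 u^{-N-1}$ is non-decreasing for $u$ in the compact range $[\min\phi_-,\max\phi_+]\subset(0,\infty)$, so the scheme $(P+K)\phi_{j+1} = K\phi_j - \frac{n-1}{n}\tau^2 \phi_j^{N-1} + A^2 \phi_j^{-N-1}$ started at $\phi_0 = \phi_-$ increases monotonically, stays below $\phi_+$, and converges in $W^{2,p/2}$ to a solution $\phi$ with $\phi_- \leq \phi \leq \phi_+$; elliptic regularity ($L^{p/2}$ right-hand side, principal part $-\Delta$) gives $\phi \in W^{2,p/2}$. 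Uniqueness follows by subtracting two solutions and testing against the difference $w$: the left side equals $\frac{4(n-1)}{n-2}\int|\nabla w|^2 \geq 0$ while the right side is $\leq 0$, since $u \mapsto -\frac{n-1}{n}\tau^2 u^{N-1} + A^2 u^{-N-1}$ is non-increasing; hence both vanish, so $w$ is constant, and since it vanishes wherever $\tau \neq 0$ or $A \neq 0$ (there the nonlinearity is strictly decreasing) one gets $w \equiv 0$. Finally, for continuity of $A \mapsto \phi$: the constant $C$ above depends only on $(g,\tau)$, and $\min(A^2,1)$, hence $\phi_-$, depends continuously on $A$ in $L^{p/2}$ (by the strong monotonicity of the auxiliary operator) and stays uniformly bounded below because $A \not\equiv 0$ is preserved under $L^p$-limits; thus for $A_k \to A$ in $L^p$ the solutions $\phi_k$ lie between uniformly bounded, uniformly positive barriers, the equation bounds them in $W^{2,p/2}$ and hence precompactly in $C^0$, any subsequential limit solves the limiting equation and equals $\phi$ by uniqueness, and reinserting the $C^0$-convergence into the equation upgrades it to $W^{2,p/2}$-convergence of the whole sequence.

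I expect the construction of the subsolution to be the delicate point: in the vanishing-Yamabe case $P$ is degenerate (its kernel being nontrivial) and $\tau$, $A$ may vanish on large sets, so a constant subsolution fails, and the truncated-equation device above is precisely where the hypothesis $A \not\equiv 0$ is used---just as $\tau \not\equiv 0$ is forced by the supersolution. What remains is routine but requires care: keeping the elliptic estimates, the maximum principle, and the monotone iteration valid with coefficients merely in $L^{p/2}$.
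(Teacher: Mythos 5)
Your proof is correct in outline but takes a genuinely different route from the paper. The paper's proof is variational: it regularizes the functional
\[
 I(\phi) = \int_M \left(\frac{2(n-1)}{n-2} |d\phi|^2 + \frac{\scal}{2} \phi^2 + \frac{n-1}{Nn} \tau^2 \phi^N + \frac{A^2}{N \phi^N}\right) d\mu^g
\]
(which is ill-defined on $W^{1,2}$ because of the $\tau^2\phi^N$ term) by truncating $\tau$, adding $\epsilon_k$ to both $\scal$ and $\phi$ in the singular term, minimizes the regularized $I_k$ on a convex constraint set, derives uniform sub/super-solution bounds for the minimizers $\phi_k$, and passes to the limit $k\to\infty$; uniqueness then comes from strict convexity of $I$ on the open positive cone in $W^{2,p/2}$. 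You instead conformally normalize to $\scal\equiv 0$ (using $\cY_g=0$), build explicit barriers for the actual equation, and run a monotone iteration between them, with uniqueness from the usual energy argument. Both are classical; your approach avoids the regularization of the variational integrand entirely by staying in $W^{2,p/2}$ from the start, at the cost of needing to verify the monotone-iteration machinery with coefficients only in $L^{p/2}$ (which you acknowledge). Note also that the paper's proof only uses $\cY_g\geq 0$, so it covers the positive-Yamabe case too, whereas your reduction to scalar-flat is specific to $\cY_g=0$ (the standing hypothesis here, so no loss). Finally, the paper proves continuity via the implicit function theorem (deferred to Section~3), while you use compactness and uniqueness, which is equally valid.

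Two small repairs to your argument: the shift constant $K$ should contain $\frac{(n-1)(N-1)}{n}\tau^2(\max\phi_+)^{N-2}$ rather than $\frac{(n-1)^2}{n}\tau^2(\max\phi_+)^{N-2}$, since the pointwise derivative of $u\mapsto -\frac{n-1}{n}\tau^2 u^{N-1}$ is $-\frac{(n-1)(N-1)}{n}\tau^2 u^{N-2}$ and $N-1 = \frac{n+2}{n-2}$ exceeds $n-1$ for $n=3$; and in the positivity of $\phi_-$ you should spell out that $\phi_-\equiv 0$ is excluded because $\min(A^2,1)\not\equiv 0$, after which the Harnack inequality for the linear operator $P + (\frac{n-1}{n}\tau^2\phi_-^{N-2}+1)$ with $L^{p/2}$ potential gives $\phi_->0$ everywhere. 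Neither affects the validity of the method.
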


It should be noted that, if either $A \equiv 0$ or $\tau \equiv 0$, there
cannot be any non-zero solution for a simple reason. Multiplying the Lichnerowicz
equation by $\phi_0$ and integrating over $M$, the conformal Laplacian disappears
by (formal) self-adjointness leaving the following equality
\[
 \frac{n-1}{n} \int_M \tau^2 \phi_0 \phi^{N-1} d\mu^g = \int_M A^2 \phi_0 \phi^{-N-1} d\mu^g.
\]
If $A$ or $\tau$ vanishes, this identity leads to a contradiction if
Equation \eqref{eqLichnerowicz2} admits a positive solution $\phi$.
There is one exception to this fact, namely when both $\tau$ and $A$
vanish (compare with \cite{Isenberg}).
In this case the solutions to \eqref{eqLichnerowicz2} are 
the $\lambda \phi_0$, $\lambda \in \bR_+$. We will not consider these
cases anymore.

\begin{proof}[Proof of Proposition \ref{propLichnerowicz}]
We use a variational approach. Note that the functional
\begin{equation}\label{eqDefFunctional}
 I(\phi) \definedas \int_M \left(\frac{2(n-1)}{n-2} |d\phi|^2 + \frac{\scal}{2} \phi^2 + \frac{n-1}{Nn} \tau^2 \phi^N + \frac{A^2}{N \phi^N}\right) d\mu^g
\end{equation}
is ill-defined on $W^{1, 2}(M, \bR)$ since the term $\tau^2 \phi^N$ does not belong to $L^1(M, \bR)$.
For any positive integer $k$, we set
$\tau_k \definedas \min\{\tau, k\} \in L^\infty$ so
that $\tau_k \to \tau$ in $L^p$ and $\epsilon_k \definedas 1/k$, we introduce
the family of functionals
\[
 I_k(\phi) \definedas \int_M \left(\frac{2(n-1)}{n-2} |d\phi|^2 + \frac{\scal+\epsilon_k}{2} \phi^2 + \frac{n-1}{Nn} \tau_k^2 \phi^N + \frac{A^2}{N(\phi+\epsilon_k)^N}\right) d\mu^g.
\]
$I_k$ is well defined, continuous and convex on the closed set
\[
 C_k \definedas \{\phi \in W^{1, 2}(M, \bR), \phi \geq \epsilon_k/2 \text{ a.e.}\}
\]
(details can be found in \cite{GicquaudNguyen}). We claim that there
exists $\mu_k > 0$ so that $I_k(\phi) \geq \mu_k \|\phi\|_{W^{1, 2}(M, \bR)}^2$
for all $\phi \in C_k$. Indeed, it suffices to prove that there exists
$\mu_k > 0$ such that
\[
 \mu_k \|\phi\|^2_{W^{1, 2}(M, \bR)} \leq \int_M \left(\frac{2(n-1)}{n-2} |d\phi|^2 + \frac{\scal+\epsilon_k}{2} \phi^2\right) d\mu^g
\]
for all $\phi \in W^{1, 2}(M, \bR)$. Since $\scal \in L^{p/2}(M, \bR)$ with $p > n$,
we have
\begin{align*}
 &\left\|\left(\frac{\scal+\epsilon_k}{2} - \frac{2(n-1)}{n-2} \right)\phi^2\right\|_{L^1(M, \bR)}\\
 &\qquad \leq \frac{1}{2} \left[\left\|\scal\right\|_{L^{p/2}(M, \bR)} + \left(\frac{2(n-1)}{n-2}- \epsilon_k\right) \vol(M, g)^{2/p}\right] \|\phi\|^2_{L^q(M, \bR)},
\end{align*}
with $q = \frac{2p}{p-2} < N$. Set
\[
 c \definedas \frac{1}{2} \left[\left\|\scal\right\|_{L^{p/2}(M, \bR)} + \frac{2(n-1)}{n-2} \vol(M, g)^{2/p}\right]
\]
so that 
\[
 \left\|\left(\frac{\scal+\epsilon_k}{2} - \frac{2(n-1)}{n-2} \right)\phi^2\right\|_{L^1(M, \bR)} \leq c \|\phi\|^2_{L^q(M, \bR)}.
\]
By interpolation, for any $\epsilon > 0$, there is a constant $\Lambda_\epsilon > 0$
such that, for all $\phi \in W^{1, 2}(M, \bR)$,
\[
 \|\phi\|^2_{L^q(M, \bR)} \leq \epsilon \|\phi\|^2_{W^{1, 2}(M, \bR)} + \Lambda_\epsilon \|\phi\|_{L^2(M, \bR)}^2.
\]
Choose $\epsilon = (n-1)/(c(n-2))$. We have
\begin{align*}
&\int_M \left(\frac{2(n-1)}{n-2} |d\phi|^2 + \frac{\scal+\epsilon_k}{2} \phi^2\right) d\mu^g\\
&\qquad = \frac{2(n-1)}{n-2} \int_M \left(|d\phi|^2 + \phi^2 \right)d\mu^g + \int_M\left(\frac{\scal+\epsilon_k}{2} - \frac{2(n-1)}{n-2}\right) \phi^2 d\mu^g\\
&\qquad \geq \frac{2(n-1)}{n-2} \|\phi\|_{W^{1, 2}(M, \bR)}^2 - \left\|\left(\frac{\scal+\epsilon_k}{2} - \frac{2(n-1)}{n-2}\right) \phi^2\right\|_{L^1(M, \bR)}\\
&\qquad \geq \frac{2(n-1)}{n-2} \|\phi\|_{W^{1, 2}(M, \bR)}^2 - c \left\|\phi\right\|_{L^q(M, \bR)}^2\\
&\qquad \geq \frac{n-1}{n-2} \|\phi\|_{W^{1, 2}(M, \bR)}^2 - c \Lambda_\epsilon \left\|\phi\right\|_{L^2(M, \bR)}^2.
\end{align*}
However, from the definition of the Yamabe invariant, we have
\[
 \int_M \left(\frac{2(n-1)}{n-2} |d\phi|^2 + \frac{\scal}{2} \phi^2\right) d\mu^g \geq 0
\]
for all functions $\phi \in W^{1, 2}(M, \bR)$. As a consequence,
\[
 \int_M \left(\frac{2(n-1)}{n-2} |d\phi|^2 + \frac{\scal+\epsilon_k}{2} \phi^2\right) d\mu^g \geq \frac{\epsilon_k}{2} \|\phi\|_{L^2(M, \bR)}^2.
\]
Finally, combining both estimates, we obtain
\begin{align*}
&\left(1 + \frac{2c \Lambda_\epsilon}{\epsilon_k}\right) \int_M \left(\frac{2(n-1)}{n-2} |d\phi|^2 + \frac{\scal+\epsilon_k}{2} \phi^2\right) d\mu^g\\
&\qquad \geq \frac{n-1}{n-2} \|\phi\|_{W^{1, 2}(M, \bR)}^2 - c \Lambda_\epsilon \left\|\phi\right\|_{L^2(M, \bR)}^2 + \frac{2c \Lambda_\epsilon}{\epsilon_k} \frac{\epsilon_k}{2} \|\phi\|_{L^2(M, \bR)}^2\\
&\qquad \geq \frac{n-1}{n-2} \|\phi\|_{W^{1, 2}(M, \bR)}^2.
\end{align*}
This proves that for all $\phi \in C_k$, we have $I_k(\phi) \geq \mu_k \|\phi\|_{W^{1, 2}(M, \bR)}^2$
with
\[
 \mu_k = \frac{n-1}{n-2} \left(1 + \frac{2c \Lambda_\epsilon}{\epsilon_k}\right)^{-1}.
\]
In particular, any minimizing sequence $(\phi_i)$ for $I_k$ is bounded in $W^{1, 2}(M, \bR)$
since the norm of $\phi_i$ eventually becomes less that $\mu_k^{-1} I_k(1)$.
It is then a standard fact that there exists a minimizer $\phi_k$ for $I_k$ in $C_k$ and, since
$I_k$ is strictly convex, $\phi_k$ is unique.

At this point, we remark that for any $\phi \in C_k$, $I_k(|\phi|) \leq I_k(\phi)$,
so $\phi_k \geq 0$.
It should be noted that $C_k$ has empty interior in $W^{1, 2}(M, \bR)$ so it makes no
sense to speak of the (G\^ateau) differential of $I_k$. However, if $f$ is
a smooth (more generally, if $f \in W^{1, 2}(M, \bR) \cap L^\infty(M, \bR)$), we can define
the directional derivative of $I_k$ in the direction $f$. This is sufficient
to conclude that $\phi_k$ is a weak solution to
\begin{equation}\label{eqLichnerowiczK}
 -\frac{4(n-1)}{n-2} \Delta \phi + (\scal+\epsilon_k) \phi
= -\frac{n-1}{n} \tau_k^2 \phi^{N-1} + A^2 (\phi+\epsilon_k)^{-N-1}.
\end{equation}
Note that the right hand side of this equation belongs to $L^{p/2}(M, \bR)$ so,
by elliptic regularity, we have $\phi_k \in W^{2, p/2}(M, \bR)$ and from Harnack's
inequality (see e.g. \cite{TrudingerMeasurable}), $\phi_k > 0$.

We now let $k$ tend to infinity. We first prove that the functions $\phi_k$
are uniformly bounded from below by constructing suitable subsolutions.
Let $u \in W^{2, p/2}(M, \bR)$ denote the solution to the following equation
\[
 - \frac{4(n-1)}{n-2} \Delta u + \scal u + \frac{n-1}{n} \tau^2 u = A^2.
\]
$u$ can be obtained by minimizing the functional
\[
 J(u) \definedas \int_M \left[\frac{2(n-1)}{n-2} |du|^2 + \left(\frac{\scal}{2} + \frac{n-1}{2n} \tau_0^2\right) u^2 - A^2 u\right] d\mu^g,
\]
and, as before $J(|u|) \leq J(u)$ so $u \geq 0$ and $u > 0$ by Harnack's
inequality (Note that we overcame the ill-definiteness of the $\tau$-term by changing
the exponent). Let $u_k$ denote the solution to
\[
 - \frac{4(n-1)}{n-2} \Delta u_k + (\scal + \epsilon_k) u_k + \frac{n-1}{n} \tau^2 u_k = A^2.
\]
We let the reader convince himself that $u_k \in W^{2, p/2}(M, \bR)$, $u_k > 0$ and
$u_k \to u$ in $W^{2, p/2}(M, \bR)$ as $k$ tends to infinity. Since $W^{2, p/2}(M, \bR)$
embeds continuously in $L^\infty(M, \bR)$, there exist constants
$c_-, c_+ > 0$ such that
\[
 c_-\leq u_k \leq c_+
\]
for all $k$.
We now look for $\lambda_- > 0$ so that $\phi_{k,-} = \lambda_- u_k$
is a subsolution to Equation \eqref{eqLichnerowiczK}. We want
\begin{equation}\label{eqSubsolution}
 -\frac{4(n-1)}{n-2} \Delta \phi_{k,-} + (\scal+\epsilon_k) \phi_{k,-}
\leq -\frac{n-1}{n} \tau_k^2 \phi_{k,-}^{N-1} + A^2 (\phi_{k,-}+\epsilon_k)^{-N-1}.
\end{equation}
Equivalently, 
\[
 \lambda_- \left(-\frac{4(n-1)}{n-2} \Delta u_k + (\scal+\epsilon_k) u_k\right) + \frac{n-1}{n} \tau_k^2 \lambda_-^{N-1} u_k^{N-1}
\leq A^2 (\lambda_- u_k+\epsilon_k)^{-N-1}
\]
which can be rewritten as follows:
\[
 \frac{n-1}{n} \tau_k^2 \lambda_-^{N-1} u_k^{N-1} - \frac{n-1}{n} \tau^2 \lambda_- u_k
\leq A^2 (\lambda_- u_k+\epsilon_k)^{-N-1} - \lambda_- A^2.
\]
Since $\tau_k \leq \tau$, the left hand side is non-positive if
$\lambda_-^{N-1} u_k^{N-1} \leq \lambda_- u_k$, i.e. $\lambda_- \leq (c_+)^{-1}$.
On the other hand, the right hand side is non-negative if
\[
 \lambda_- (\lambda_- u_k + \epsilon_k)^{N+1} \leq 1
\]
Since $\epsilon_k \leq 1$, we see that the previous inequality holds
when $\lambda_- \leq (1 + c_+)^{-N-1}$. We have proven that if
\[
 \lambda_- \leq \min\{(c_+)^{-1}, (1 + c_+)^{-N-1} \},
\]
$\phi_{-, k}$ is a subsolution to Equation \eqref{eqLichnerowiczK}.
We now prove that $\phi_k \geq \phi_{-, k}$. We compute the difference
between \eqref{eqLichnerowiczK} and \eqref{eqSubsolution}, multiply
it by $(\phi_k - \phi_{-, k})_- = \min\{0, \phi_k - \phi_{-, k}\}$
and integrate over $M$:
\begin{align*}
&\int_M \left(\frac{4(n-1)}{n-2} \left|d (\phi_k - \phi_{-, k})_-\right|^2 + (\scal+\epsilon_k) \left|(\phi_k - \phi_{-, k})_-\right|^2\right) d\mu^g\\
&\qquad \leq - \frac{n-1}{n} \int_M \tau_k^2 \left(\phi_k^{N-1} - \phi_{-,k}^{N-1}\right) (\phi_k - \phi_{-, k})_- d\mu^g\\
&\qquad\qquad + \int_M A^2 \left[(\phi_k+\epsilon_k)^{-N-1} - (\phi_{-,k}+\epsilon_k)^{-N-1}\right] (\phi_k - \phi_{-, k})_- d\mu^g.
\end{align*}
The right hand side is non-positive while the left hand side is non-negative.
This imposes that
\begin{align*}
 &\mu_k \|(\phi_k - \phi_{-, k})_-\|_{W^{1, 2}(M, \bR)}\\
 &\qquad \leq \int_M \left(\frac{4(n-1)}{n-2} \left|d (\phi_k - \phi_{-, k})_-\right|^2 + (\scal+\epsilon_k) \left|(\phi_k - \phi_{-, k})_-\right|^2\right) d\mu^g = 0
\end{align*}
So $(\phi_k - \phi_{-, k})_- \equiv 0$, which means that
$\phi_k \geq \phi_{-, k} \geq \lambda_- c_- > 0$.
This ends the proof of the fact that the functions $\phi_k$
are uniformly bounded from below. Let $\phi_+ \in W^{2, p/2}(M, \bR)$
denote the positive solution to
\[
 - \frac{4(n-1)}{n-2} \Delta \phi_+ + \scal \phi_+ + \frac{n-1}{n} \tau_1^2 \phi_+^{N-1} = \frac{A^2}{(\lambda_- c_-)^{N+1}},
\]
(we remind the reader that $\tau_1 = \min\{\tau, 1\}$). By similar arguments,
we can prove that $\phi_k \leq \phi_+$ so the sequence of functions $\phi_k$
is uniformly bounded from above and from below:
\[
 \lambda_- c_- \leq \phi_k \leq \max \phi_+.
\]
We rewrite Equation \eqref{eqLichnerowiczK} as
\begin{equation}\label{eqEstimate2}
-\frac{4(n-1)}{n-2} \Delta \phi_k + \phi_k = (1 - \scal+\epsilon_k) \phi_k
-\frac{n-1}{n} \tau_k^2 \phi_k^{N-1} + A^2 (\phi_k+\epsilon_k)^{-N-1},
\end{equation}
and notice that the right hand side is bounded in $L^{p/2}(M, \bR)$, so, by elliptic
regularity, $(\phi_k)_k$ is uniformly bounded in $W^{2, p/2}(M, \bR)$.
Since $W^{2, p/2}(M, \bR)$ compactly embeds in $L^\infty(M, \bR)$, we can assume that
$\phi_k$ converges strongly to some $\phi_\infty$ in $L^\infty(M, \bR)$ and, from 
\eqref{eqEstimate2}, $\phi_\infty \in W^{2, p/2}(M, \bR)$ solves the Lichnerowicz
equation \eqref{eqLichnerowicz2}.

Now the functional $I$ introduced in \eqref{eqDefFunctional} makes perfect
sense on the (open) subset $\Omega_+ = \{\phi \in W^{2, p/2}(M, \bR), \phi > 0\}$.
It is differentiable and strictly convex. Furthermore $\phi_\infty$ is a
critical point for $I$ on $\Omega_+$. So it must be the unique minimum of $I$
on $\Omega_+$. Since a strictly convex functional can only have a single critical
point and critical points of $I$ are exactly the solutions of \eqref{eqLichnerowicz2},
we conclude that $\phi_\infty$ is the unique solution to \eqref{eqLichnerowicz2}.

Continuity of $\phi_\infty$ with respect to $A$ follows from the implicit function
theorem in a way that is similar to the one presented in the next section so we omit
the proof of it.
\end{proof}

\begin{proposition}\label{propVector}
 Let $g \in W^{2, p/2}(M, S_2M)$, $\eta \in W^{1, p}(M, \bR)$, $\eta > 0$ and
$\xi \in L^q(M, TM)$ be given, for some $q \in (1, p/2)$. Assume that $g$ has
no conformal Killing vector fields, i.e. no non-trivial vector field $V$ such
that $\bL V = 0$. There exists a unique $W \in W^{2, q}(M, TM)$ solving
\begin{equation}\label{eqVectLapl}
 \DeltaL W = \xi.
\end{equation}
Further, the mapping $\xi \mapsto W$ is continuous.
\end{proposition}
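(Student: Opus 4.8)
The plan is to combine the variational structure of $\DeltaL$ with an elliptic bootstrap, in the spirit of the proof of Proposition~\ref{propLichnerowicz}. First I would record that $-\DeltaL$ is formally self-adjoint and non-negative: for smooth vector fields $V, W$, integration by parts gives
\[
  a(V,W) \definedas \int_M \langle -\DeltaL V, W\rangle\, d\mu^g = \frac14 \int_M \frac{1}{\eta}\,\langle \bL V, \bL W\rangle\, d\mu^g \geq 0 ,
\]
and $a$ extends to a bounded symmetric bilinear form on $W^{1,2}(M, TM)$ since $\eta^{-1} \in W^{1,p}(M,\bR) \hookrightarrow C^0$ and $\bL \colon W^{1,2}(M,TM) \to L^2$ is bounded, both because $p > n$. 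The crucial ingredient is coercivity: there is $C > 0$ with $\|W\|_{W^{1,2}} \leq C\,\|\bL W\|_{L^2}$ for all $W \in W^{1,2}(M,TM)$. As $\bL$ has injective principal symbol, a G\aa rding--Korn-type inequality gives $\|W\|_{W^{1,2}}^2 \leq C(\|\bL W\|_{L^2}^2 + \|W\|_{L^2}^2)$ at this regularity, and the lower-order term is absorbed by a compactness argument: otherwise there would be $W_i$ with $\|W_i\|_{W^{1,2}} = 1$ and $\bL W_i \to 0$ in $L^2$, a subsequence of which converges weakly in $W^{1,2}$ and strongly in $L^2$ to a limit $W_\infty$ with $\|W_\infty\|_{L^2} = 1$ and $\bL W_\infty = 0$, i.e.\ a non-trivial conformal Killing field, contradicting the hypothesis on $g$. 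I expect this coercivity step, together with the regularity bootstrap below, to be the main obstacle.

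Granting coercivity, existence for $\xi \in L^2(M,TM)$ follows by minimizing the strictly convex coercive functional $F(W) = \tfrac12\, a(W,W) + \int_M \langle \xi, W\rangle\, d\mu^g$ over $W^{1,2}(M,TM)$ — the linear term is bounded there because $W^{1,2}(M,TM) \hookrightarrow L^{2n/(n-2)}$ and $\xi \in L^2 \hookrightarrow L^{2n/(n+2)}$ on the compact manifold $M$ — which yields a unique weak solution $W \in W^{1,2}(M,TM)$ of $\DeltaL W = \xi$ (equivalently, one may invoke Lax--Milgram). I would then rewrite the equation with its continuous principal part on the left and the terms carrying the Christoffel symbols of $g$, the curvature of $g$, and $d\eta$ on the right, and run a standard elliptic-regularity bootstrap; in each step the exponent bookkeeping reduces to the inequality $p \geq n$, which holds strictly, so after finitely many iterations $W \in W^{2,q}(M,TM)$ for the given $q \in (1, p/2)$. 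For a general $\xi \in L^q(M,TM)$, which need not lie in the dual of $W^{1,2}$, I would approximate $\xi$ in $L^q$ by a sequence $\xi_j \in L^2(M,TM)$, solve $\DeltaL W_j = \xi_j$ as above, and pass to the limit: by linearity and the a~priori estimate below, $(W_j)$ is Cauchy in $W^{2,q}(M,TM)$, and its limit $W$ satisfies $\DeltaL W = \xi$ by continuity of $\DeltaL \colon W^{2,q}(M,TM) \to L^q(M,TM)$.

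Uniqueness is then immediate: if $\DeltaL W_1 = \DeltaL W_2$ with $W_1, W_2 \in W^{2,q}(M,TM)$, then $V \definedas W_1 - W_2$ solves $\DeltaL V = 0$, and pairing with $V$ and integrating by parts (valid at this regularity) gives $\int_M \eta^{-1}|\bL V|^2\, d\mu^g = 0$, so $\bL V = 0$ and $V = 0$ because $g$ has no conformal Killing field. Finally, the a~priori estimate $\|W\|_{W^{2,q}} \leq C\,\|\xi\|_{L^q}$ — from which continuity of $\xi \mapsto W$ is immediate by linearity — follows from the elliptic estimate $\|W\|_{W^{2,q}} \leq C(\|\DeltaL W\|_{L^q} + \|W\|_{L^q})$ once $\|W\|_{L^q}$ is absorbed; this absorption is again obtained by contradiction, extracting from a normalized sequence with $\DeltaL W_i \to 0$ a subsequence converging strongly in $L^q$ and weakly in $W^{2,q}$ to a non-trivial solution of $\DeltaL W_\infty = 0$, hence to a non-trivial conformal Killing field, which is excluded.
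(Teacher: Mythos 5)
Your argument follows the same route as the paper: a variational/Lax--Milgram formulation on $W^{1,2}(M,TM)$ whose bilinear form is shown coercive by a compactness-and-contradiction argument exploiting the absence of conformal Killing fields, followed by elliptic regularity to reach $W^{2,q}(M,TM)$. The only differences are presentational: the paper derives the intermediate Korn-type inequality from the explicit Bochner formula for $\bL$ rather than invoking a generic G\aa rding estimate, and your extra care in approximating $\xi\in L^q(M,TM)$ by $L^2$ data when $q < 2n/(n+2)$ (so that $\xi$ need not lie in the dual of $W^{1,2}$) is a sound refinement of a point the paper leaves implicit.
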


\begin{proof}
 As in the proof of the previous proposition, we introduce the functional
\[
 J(W) \definedas \frac{1}{2} \int_M \frac{\left|\bL W\right|^2}{2\eta} d\mu^g - \int_M \left\<W, \xi\right\> d\mu^g
\]
for any $W \in W^{1, 2}(M, TM)$.
Since, in any coordinate system
\[
 \nabla_i W_j = \partial_i W_j - \Gamma^k_{ij} W_k,
\]
where $\Gamma^k_{ij} = \frac{1}{2} g^{kl}\left(\partial_i g_{lj} + \partial_j g_{il} - \partial_l g_{ij}\right) \in W^{1, p/2} \subset L^n$
denotes the Christoffel symbol of $g$, we have that, for any $W \in W^{1, 2}(M, TM)$, $\Gamma^k_{ij} W_k \in L^2(M, \bR)$ as a sum of products of
functions in $L^n$ and in $L^N(M, \bR)$. Hence, $\nabla W \in L^2(M, T^{\otimes 2}M)$ and, in particular, $\bL W \in L^2(M, S_2M)$.

We claim that the quadratic part of $J$ is coercive on $W^{1, 2}(M, TM)$.
Indeed, there exists a constant $\Lambda > 0$ so that $2\eta \leq \Lambda$.
It follows from the Bochner formula for $\bL$ that
\begin{align*}
\frac{1}{2} \int_M \frac{\left|\bL W\right|^2}{2\eta} d\mu^g
  &\geq \frac{1}{2\Lambda} \int_M \left|\bL W\right|^2 d\mu^g\\
  &\geq \frac{1}{\Lambda} \int_M \left[ \left|\nabla W\right|^2 + \left(1 - \frac{2}{n}\right) \left(\divg W\right)^2 - \ric(W,W)\right] d\mu^g\\
  &\geq \frac{1}{\Lambda} \left[\int_M \left|\nabla W\right|^2d\mu^g - \left\|\ric\right\|_{L^{p/2}(M, S_2M)} \left\|W\right\|^2_{L^q(M, TM)}\right],
\end{align*}
where $q = 2p/(p-2) < N$. Now assume that for all $k \in \bN_+$ there exists
a non-zero $W_k \in W^{1, 2}(M, TM)$ such that
\[
 \frac{1}{2} \int_M \frac{\left|\bL W\right|^2}{2\eta} d\mu^g \leq \frac{1}{k} \left\|W_k\right\|^2_{W^{1, 2}(M, TM)}.
\]
Without loss of generality, we can assume that $\left\|W_k\right\|_{L^q(M, TM)} = 1$.
Note that, due to the Sobolev embedding, the norm
\[
 \left\|W\right\|^2 \definedas \int_M \left|\nabla W\right|^2 d\mu^g + \left\|W\right\|^2_{L^q(M, TM)} 
\]
is equivalent to the usual Sobolev norm on $W^{1, 2}(M, TM)$, so for some constant $\delta > 0$,
we have $\|W\|_{W^{1, 2}(M, TM)}^2 \leq \delta \left\|W\right\|^2$ for all $W \in W^{1, 2}(M, TM)$.
Hence, from the previous estimates
\begin{align*}
&\frac{1}{\Lambda} \left[\int_M \left|\nabla W_k\right|^2d\mu^g - \left\|\ric\right\|_{L^{p/2}(M, S_2M)} \left\|W_k\right\|^2_{L^q(M, TM)}\right]\\
&\qquad \leq \frac{\delta}{k} \left(\int_M \left|\nabla W_k\right|^2 d\mu^g + \left\|W_k\right\|^2_{L^q(M, TM)}\right).
\end{align*}
Equivalently, using $\|W_k\|_{L^q(M, TM)} = 1$,
\[
\left(\frac{1}{\Lambda} - \frac{\delta}{k}\right) \int_M \left|\nabla W_k\right|^2d\mu^g \leq  \frac{\left\|\ric\right\|_{L^{p/2}(M, S_2M)}}{\Lambda} + \frac{\delta}{k}.
\]
It follows that $(W_k)$ is bounded in $W^{1, 2}(M, TM)$. Since $W^{1, 2}(M, TM)$ compactly embeds into $L^q(M, TM)$,
we can assume that $W_k$ converges to $W_\infty$ strongly in $L^q(M, TM)$ and weakly in $W^{1, 2}(M, TM)$.
We have $\|W_\infty\|_{L^q(M, TM)} = 1$ so $W_\infty$ is non-zero but
\begin{align*}
\int_M \frac{\left|\bL W_\infty\right|^2}{2\eta} d\mu^g
 &= \lim_{k \to \infty} \int_M \frac{\left\<\bL W_\infty, \bL W_k\right\>}{2\eta} d\mu^g\\
 &\leq \left(\int_M \frac{\left|\bL W_\infty\right|^2}{2\eta} d\mu^g\right)^{1/2} \left(\lim_{k \to \infty} \int_M \frac{\left|\bL W_k\right|^2}{2\eta} d\mu^g\right)^{1/2} = 0.
\end{align*}
Namely, we obtained a contradiction.

It then follows from the Lax-Milgram theorem that the functional $J$ admits
a unique minimizer $W \in W^{1, 2}(M, TM)$ which is then a weak solution to
\eqref{eqVectLapl}. Elliptic regularity then implies that $W \in W^{2, q}(M, TM)$.
\end{proof}

\section{An implicit function argument}\label{secImplicit}
In this section, we make the following regularity assumptions:
\begin{itemize}
 \item $g \in W^{2, p/2}(M, S_2M)$,
 \item $\tau \in W^{1, p/2}(M, \bR)$,
 \item $\sigma \in L^p(M, S_2M)$,
 \item $\eta \in W^{1, p}(M, \bR)$, $\eta > 0$
\end{itemize}
for some $p > n$.
The idea in \cite{GicquaudNgo} is to introduce a parameter $\lambda > 0$
in the system \eqref{system}. Namely, we set $\phi = \lambda \phitil$
and $W = \lambda^N \Wtil$ so the system becomes

\begin{subequations}\label{systemQ}
\begin{empheq}[left=\empheqlbrace]{align}
-\frac{4(n-1)}{n-2} \Delta_g \phitil + \scal \phitil &= - \frac{n-1}{n} \lambda^{N-2} \tau^2 \phitil^{N-1}\nonumber\\
 &\qquad + \left| \lambda^{-\frac{N+2}{2}} \sigma + \frac{\lambda^{\frac{N-2}{2}}}{2\eta}\bL_g \Wtil \right|_g^2 \phitil^{-N-1}, \label{eqLichnerowiczQ}\\
\DeltaL \Wtil &= \frac{n-1}{n} \phitil^N \nabla\tau. \label{eqVectorQ}
\end{empheq}
\end{subequations}
(note that the rescaling we present here is different from the one in
\cite{GicquaudNgo}). Setting
\begin{equation}\label{eqScalingSigma0}
 \sigma = \lambda^{\frac{N+2}{2}} \sigmatil
\end{equation}
the Lichnerowicz equation \eqref{eqLichnerowiczQ} reads
\[
 -\frac{4(n-1)}{n-2} \Delta_g \phitil + \scal \phitil = - \frac{n-1}{n} \lambda^{N-2} \tau^2 \phitil^{N-1}
 + \left| \sigmatil + \frac{\lambda^{\frac{N-2}{2}}}{2\eta}\bL_g \Wtil \right|_g^2 \phitil^{-N-1}.
\]
Letting $\lambda$ go to zero, we see that the system \eqref{systemQ}
is a perturbation of
\begin{subequations}\label{systemR0}
\begin{empheq}[left=\empheqlbrace]{align}
-\frac{4(n-1)}{n-2} \Delta_g \phitil_0 + \scal \phitil_0 &= \big|\sigmatil\big|_g^2 \phitil_0^{-N-1}, \label{eqLichnerowiczR0}\\
\DeltaL \Wtil_0 &= \frac{n-1}{n} \phitil_0^N \nabla\tau. \label{eqVectorR0}
\end{empheq}
\end{subequations}
So $\Wtil$ has disappeared from Equation \eqref{eqLichnerowiczR0}.
Solving the equation \eqref{eqLichnerowiczR0} requires that the Yamabe
invariant of $(M, g)$ be positive since the metric $\gbar = \phitil_0^{N-2} g$
has scalar curvature $\scaltil_0 = |\sigma|_g^2 \phitil_0^{-2N}$, which is
non-negative and non-zero. This explains why the method was limited to $\cY_g > 0$.\\

As we indicated before, in the case $\cY_g = 0$, the conformal Laplacian
\[
 u \mapsto -\frac{4(n-1)}{n-2} \Delta_g u + \scal u
\]
has a 1-dimensional kernel generated by a positive function
$\phi_0 \in W^{2, p/2}(M, \bR)$ which we normalize so that
\[
 \int_M \phi_0^2 d\mu^g = 1.
\]
Since the conformal Laplacian is Fredholm with index zero and formally
self adjoint, the equation
\[
 -\frac{4(n-1)}{n-2} \Delta_g u + \scal u = f
\]
with $f \in L^{p/2}(M, \bR)$ is solvable iff
\[
 \int_M f \phi_0 d\mu^g = 0.
\]
The solution $u \in W^{2, p/2}(M, \bR)$ is unique up to the addition of a constant
multiple of $\phi_0$ so it is unique if we impose further that
\[
 \int_M u \phi_0 d\mu^g = 0.
\]

If we change the scaling law \eqref{eqScalingSigma0} of $\sigma$ to
\begin{equation}\label{eqScalingSigma}
 \sigma = \lambda^{N} \sigmatil,
\end{equation}
the system \eqref{system} reads now
\[
\left\lbrace
\begin{aligned}
-\frac{4(n-1)}{n-2} \Delta_g \phitil + \scal \phitil &= \lambda^{N-2}\left( - \frac{n-1}{n}  \tau^2 \phitil^{N-1}
   + \left| \sigmatil + \frac{1}{2\eta}\bL_g \Wtil \right|_g^2 \phitil^{-N-1}\right),\\
\DeltaL \Wtil &= \frac{n-1}{n} \phitil^N \nabla\tau.
\end{aligned}
\right.
\]
Hence, setting
\begin{equation}\label{eqPhitil}
 \phitil \definedas c_\lambda \phi_0 + \lambda^{N-2} \psi_\lambda,
\end{equation}
where $\psi_\lambda$ belongs to the space $\Wring^{2, p/2}(M, \bR)$ of $W^{2, p}(M, \bR)$-functions orthogonal
to $\phi_0$ for the $L^2$-product:
\begin{equation}\label{eqDefRing}
 \Wring^{2, p/2}(M, \bR) \definedas \left\{u \in W^{2, p/2}(M, \bR), \int_M u \phi_0 d\mu^g = 0\right\}
\end{equation}
(more generally, for any function space $F$ such that $F \hookrightarrow L^2(M, \bR)$,
we will denote by $\mathring{F}$ the set of functions $u$ belonging to $F$ that
are $L^2$-orthogonal to $\phi_0$).
We finally arrive at
\[
\left\lbrace
\begin{aligned}
 -\frac{4(n-1)}{n-2} \Delta_g \psi_\lambda + \scal \psi_\lambda &= \left( - \frac{n-1}{n}  \tau^2 \phitil^{N-1}
   + \left| \sigmatil + \frac{1}{2\eta}\bL_g \Wtil \right|_g^2 \phitil^{-N-1}\right),\\
\DeltaL \Wtil &= \frac{n-1}{n} \phitil^N \nabla\tau.
\end{aligned}
\right.
\]
The role of the constant $c_\lambda$
(which still appears implicitly in the definition of $\phitil$)
will be to ensure that the right hand
side of the rescaled Lichnerowicz equation is $L^2$-orthogonal to $\phi_0$.
To emphasize this, we rewrite the system as follows:
\begin{subequations}\label{systemS}
\begin{empheq}[left=\empheqlbrace]{align}
-\frac{4(n-1)}{n-2} \Delta_g \psi_\lambda + \scal \psi_\lambda &= - \frac{n-1}{n}  \tau^2 \phitil^{N-1}
   + \left| \sigmatil + \frac{\bL_g \Wtil}{2\eta} \right|_g^2 \phitil^{-N-1}, \label{eqLichnerowiczS}\\
\frac{n-1}{n} \int_M \tau^2 \phi_0 \phitil^{N-1} d\mu^g &= \int_M \phi_0 \left| \sigmatil + \frac{\bL_g \Wtil}{2\eta} \right|_g^2 \phitil^{-N-1} d\mu^g,\label{eqOrthoS}\\
\DeltaL \Wtil &= \frac{n-1}{n} \phitil^N \nabla\tau. \label{eqVectorS}
\end{empheq}
\end{subequations}

\subsection{The limit \texorpdfstring{$\lambda = 0$}{TEXT}}
In the limit $\lambda = 0$, we have $\phitil = c_0 \phi_0$ so the system \eqref{systemS}
becomes
\begin{subequations}\label{systemS0}
\begin{empheq}[left=\empheqlbrace]{align}
-\frac{4(n-1)}{n-2} \Delta_g \psi_0 + \scal \psi_0 &= - \frac{n-1}{n}  \tau^2 \phitil^{N-1}
   + \left| \sigmatil + \frac{\bL_g \Wtil}{2\eta} \right|_g^2 \phitil^{-N-1}, \label{eqLichnerowiczS0}\\
\frac{n-1}{n} c_0^{2N} \int_M \tau^2 \phi_0^N d\mu^g &= \int_M \phi_0^{-N} \big| \sigmatil + \bL_g \Wtil_0 \big|_g^2 d\mu^g,\label{eqOrthoS0}\\
\DeltaL \Wtil_0 &= \frac{n-1}{n} c_0^N \phi_0^N \nabla\tau. \label{eqVectorS0}
\end{empheq}
\end{subequations}
We solve this system from bottom to top. Namely, from standard arguments,
there exists a unique solution $\Wbar \in W^{2, p/2}(M, TM)$ to
\begin{equation}\label{eqDefWbar}
 \DeltaL \Wbar = \frac{n-1}{n} \phi_0^N \nabla\tau.
\end{equation}
so we have $\Wtil_0 = c_0^N \Wbar$. Inserting it into Equation \eqref{eqOrthoS0},
we find 
\begin{equation}\label{eqC}
 \frac{n-1}{n} c_0^{2N} \int_M \tau^2 \phi_0^N d\mu^g
 = \int_M \left(|\sigmatil|^2 + \frac{1}{\eta} c_0^N \<\sigmatil, \bL\Wbar\>
    + c_0^{2N} \frac{|\bL_g \Wbar|_g^2}{4\eta^2}\right) \phi_0^{-N} d\mu^g.
\end{equation}
This is a second order equation in $c_0^N$ which we have to assume has a
positive solution. From Descartes' rule of signs (see e.g. \cite{Yap}),
Equation \eqref{eqC} has a unique positive solution provided that
\begin{equation}\label{eqCondition}
 \frac{n-1}{n} \int_M \tau^2 \phi_0^N d\mu^g > \int_M \frac{\left|\bL_g \Wbar\right|_g^2}{4\eta^2} \phi_0^{-N} d\mu^g.
\end{equation}
Note however that there might exist situations in which Equation \eqref{eqC}
has two positive solutions. We plan to investigate this question later.

Having fulfilled the last two equations, we can finally solve Equation
\eqref{eqLichnerowiczS0} for $\psi_0 \in \Wring^{2, p/2}(M, \bR)$. We
have thus proven

\begin{proposition}\label{propVanishingLambda}
Under the assumption \eqref{eqCondition}, there exist a unique solution
$(c_0, \psi_0, \Wtil_0) \in \bR_+ \times \Wring^{2, p/2}(M, \bR) \times W^{2, p/2}(M, TM)$ to the
system \eqref{systemS0} where $\Wring^{2, p/2}(M, \bR)$ is defined in
\eqref{eqDefRing}.
\end{proposition}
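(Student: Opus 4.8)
The plan is to solve the decoupled system \eqref{systemS0} exactly as the text has already laid out, in the order bottom-to-top, and simply check that each step produces a unique object in the claimed regularity class. First I would observe that Equation \eqref{eqDefWbar} has a unique solution $\Wbar \in W^{2,p/2}(M,TM)$: this is Proposition \ref{propVector} applied with $\xi = \frac{n-1}{n}\phi_0^N \nabla\tau$. One checks $\xi \in L^q(M,TM)$ for some $q \in (1,p/2)$: indeed $\phi_0 \in W^{2,p/2} \hookrightarrow L^\infty$ and $\nabla\tau \in L^{p/2}$ since $\tau \in W^{1,p/2}$, so $\xi \in L^{p/2}$, and one may take any $q$ slightly below $p/2$. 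The hypothesis that $g$ has no conformal Killing fields is in force (it is needed for Proposition \ref{propVector} and we will use it implicitly throughout); then $\Wtil_0 = c_0^N \Wbar$ is forced by the linearity of $\DeltaL$ and the scaling $\Wtil_0 \mapsto$ solution of \eqref{eqVectorS0}.

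Next I would substitute $\Wtil_0 = c_0^N \Wbar$ into the orthogonality relation \eqref{eqOrthoS0} and expand the square, which yields the quadratic equation \eqref{eqC} in the unknown $X \definedas c_0^N$. Writing it as $a X^2 - b X - c = 0$ with
\[
 a \definedas \frac{n-1}{n} \int_M \tau^2 \phi_0^N d\mu^g - \int_M \frac{|\bL_g \Wbar|_g^2}{4\eta^2} \phi_0^{-N} d\mu^g, \quad
 b \definedas \int_M \frac{1}{\eta}\<\sigmatil, \bL\Wbar\> \phi_0^{-N} d\mu^g, \quad
 c \definedas \int_M |\sigmatil|^2 \phi_0^{-N} d\mu^g,
\]
one has $c > 0$ (since $\sigmatil \not\equiv 0$; recall $\sigma$, hence $\sigmatil$, is a nonzero TT-tensor) and, by assumption \eqref{eqCondition}, $a > 0$. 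By Descartes' rule of signs the sequence of coefficients $(a, -b, -c)$ has exactly one sign change regardless of the sign of $b$, so there is exactly one positive root $X_0$; then $c_0 \definedas X_0^{1/n} \in \bR_+$ is the unique admissible value. (If $a \le 0$ the rule gives either zero or two positive roots, which is why condition \eqref{eqCondition} is imposed.)

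Finally, with $c_0$ and $\Wtil_0$ fixed, $\phitil = c_0 \phi_0$ is determined, so the right-hand side of \eqref{eqLichnerowiczS0},
\[
 f \definedas - \frac{n-1}{n} \tau^2 \phitil^{N-1} + \Big| \sigmatil + \frac{\bL_g \Wtil_0}{2\eta} \Big|_g^2 \phitil^{-N-1},
\]
is a fixed element of $L^{p/2}(M,\bR)$: the first term lies in $L^{p/2}$ because $\tau^2 \in L^{p/2}$ (as $\tau \in W^{1,p/2} \hookrightarrow L^\infty$, using $p/2 > n/2$, one even has $\tau^2 \in L^p$, but $L^{p/2}$ suffices) and $\phitil^{N-1}$ is bounded; the second term lies in $L^{p/2}$ because $\sigmatil \in L^p$, $\bL_g \Wtil_0 \in W^{1,p/2} \subset L^p$ (Sobolev), $1/\eta$ is bounded below and above since $\eta \in W^{1,p} \hookrightarrow C^0$ and $\eta > 0$ on the compact $M$, and $\phitil^{-N-1}$ is bounded. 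Crucially, Equation \eqref{eqOrthoS0} is precisely the statement $\int_M f \phi_0 \, d\mu^g = 0$, which by the Fredholm alternative for the conformal Laplacian (recalled in the text, using $\cY_g = 0$ so the kernel is spanned by $\phi_0$) is exactly the solvability condition; hence there is a solution $\psi_0 \in W^{2,p/2}(M,\bR)$, unique up to a multiple of $\phi_0$, and imposing $\psi_0 \in \Wring^{2,p/2}(M,\bR)$ pins it down uniquely. Combining the three steps gives the unique triple $(c_0, \psi_0, \Wtil_0)$ asserted.

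The main obstacle, such as it is, is purely bookkeeping: verifying that every right-hand side lands in the correct Lebesgue space so that Propositions \ref{propLichnerowicz}–\ref{propVector} and elliptic regularity apply, and that the scaling constant $X = c_0^N$ is correctly isolated in \eqref{eqC} so that Descartes' rule delivers a \emph{unique} positive root under hypothesis \eqref{eqCondition}. There is no analytic difficulty here beyond what is already packaged in Section \ref{secPrelim}; the content of the proposition is that the $\lambda = 0$ system decouples and each piece is uniquely solvable.
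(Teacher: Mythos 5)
Your proof follows the paper's own bottom-to-top argument exactly: solve the vector equation to get $\Wbar$ and $\Wtil_0 = c_0^N\Wbar$, substitute into \eqref{eqOrthoS0} to obtain the quadratic \eqref{eqC} in $X = c_0^N$ and apply Descartes' rule of signs under hypothesis \eqref{eqCondition}, then solve \eqref{eqLichnerowiczS0} via the Fredholm alternative with the orthogonality normalization. Two harmless slips worth correcting: $c_0 = X_0^{1/N}$ rather than $X_0^{1/n}$, and $W^{1,p/2}(M,\bR) \hookrightarrow L^\infty$ requires $p > 2n$ (not $p > n$), though the fact you actually need, $\tau \in L^p$ hence $\tau^2 \in L^{p/2}$, does follow from the Sobolev embedding $W^{1,p/2} \hookrightarrow L^{np/(2n-p)}$ valid for $n \leq p < 2n$.
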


\subsection{Extending to \texorpdfstring{$\lambda > 0$}{TEXT}}
Let $\Omega \subset \bR \times \bR \times \Wring^{2, p/2}$ be the following
open subset:
\[
 \Omega \definedas \{(\lambda, c, \psi) \in \bR \times \bR \times \Wring^{2, p/2}, \text{ s.t. } c \phi_0 + \lambda^{N-2} \psi > 0\}
\]
We define the operator
\[
\Phi: \Omega \times  W^{2, p/2}(M, TM) \to \mathring{L}^2 \times \bR \times L^p(M, TM)
\]
as follows:
\begin{equation}\label{eqDefPhi}
 \Phi_\lambda(c, \psi, W) \definedas
\begin{pmatrix}
 \Pi\left(-\frac{4(n-1)}{n-2} \Delta_g \psi + \scal \psi + \frac{n-1}{n}  \tau^2 \phitil^{N-1} -  \big| \sigmatil + \bL_g \Wtil \big|_g^2 \phitil^{-N-1}\right)\\
 \frac{n-1}{n} \int_M \tau^2 \phi_0 \phitil^{N-1} d\mu^g - \int_M \phi_0 \big| \sigmatil + \bL_g \Wtil \big|_g^2 \phitil^{-N-1} d\mu^g\\
 \DeltaL \Wtil - \frac{n-1}{n} \phitil^N \nabla\tau
\end{pmatrix},
\end{equation}
where we used $\phitil = c \phi_0 + \lambda^{N-2} \psi$ as a shorthand (see
\eqref{eqPhitil}) and where $\Pi$ denotes the $L^2$-orthogonal projection onto
$\mathring{L}^2(M, \bR)$:
\[
 \Pi(f) = f - \left(\int_M f \phi_0 d\mu^g\right) \phi_0.
\]
Solving the system \eqref{systemS} is then equivalent to finding solutions
to
\[
 \Phi_\lambda(c, \psi, \Wtil) = \begin{pmatrix}0\\ 0\\ 0\end{pmatrix}.
\]
It is routine to check that $\Phi$ is well defined and $C^1$.
To apply the implicit function theorem, we only need to check that the differential
of $\Phi_\lambda$ with $\lambda = 0$ kept fixed is invertible at the point $(c_0, \psi_0, \Wtil_0)$.
Since $\phitil = c \phi_0$ when $\lambda = 0$, $\Phi_0$ reads
\[
 \Phi_0(c, \psi, W) =
\begin{pmatrix}
 \Pi\left(-\frac{4(n-1)}{n-2} \Delta_g \psi + \scal \psi + \frac{n-1}{n} c^{N-1} \tau^2 \phi_0^{N-1} -  \big| \sigmatil + \bL_g \Wtil \big|_g^2 c^{-N-1} \phi_0^{-N-1}\right)\\
 \frac{n-1}{n} c^{N-1} \int_M \tau^2 \phi_0^N d\mu^g - c^{-N-1} \int_M \big| \sigmatil + \bL_g \Wtil \big|_g^2 \phi_0^{-N} d\mu^g\\
 \DeltaL \Wtil - \frac{n-1}{n} c^N \phi_0^N \nabla\tau
\end{pmatrix},
\]
Its differential at $(c_0, \psi_0, \Wtil_0)$ can be computed:
\begin{equation}\label{eqDiffPhi}
 D \Phi_0(c_0, \psi_0, W)\begin{pmatrix} \psi'\\ c'\\W'\end{pmatrix} =
\begin{pmatrix}
 -\frac{4(n-1)}{n-2} \Delta_g + \scal & \Pi(F) & \Pi(\ell(\cdot))\\
 0 & \int_M \phi_0 Fd\mu^g & \int_M \phi_0\ell(\cdot) d\mu^g \\
 0 & -N \frac{n-1}{n} c^{N-1} \phi_0^N \nabla\tau & \DeltaL
\end{pmatrix}
\begin{pmatrix} \psi'\\ c'\\W'\end{pmatrix}
\end{equation}
where we used the following notations:
\[
\left\lbrace
\begin{aligned}
 F &\definedas \frac{n-1}{n} (N-1) c^{N-2} \tau^2 \phi_0^{N-1} + (N+1) \big| \sigmatil + \bL_g \Wtil \big|_g^2 \phi_0^{-N-1} c^{-N-2},\\
 \ell(W') &\definedas -2 c^{-N-1} \phi_0^{-N-1} \big\<\sigmatil + \bL_g \Wtil, \bL_g W'\big\>_g.
\end{aligned}
\right.
\]
The matrix of the differential in \eqref{eqDiffPhi} is not upper triangular
as is the case in \cite{GicquaudNgo}. However, the conformal Laplacian
appearing in the upper left corner of the matrix is an isomorphism from
$\Wring^{2, p/2}(M, \bR)$ to $\Lring^{p/2}(M, \bR)$ so it suffices to check
that the lower $2 \times 2$ block
is invertible. We show that for any $d \in \bR$ and any $V \in L^p(M, TM)$
there exists a unique solution to the system
\begin{equation}\label{eqSystem}
\left\lbrace
\begin{aligned}
 d &= c' \int_M \phi_0 Fd\mu^g + \int_M \phi_0\ell(W') d\mu^g,\\
 V &=  - c' N \frac{n-1}{n} c_0^{N-1} \phi_0^N \nabla\tau + \DeltaL W'.
\end{aligned}
\right.
\end{equation}
The second equation can be solved explicitely for $W'$:
\[
 W' = N c_0^{N-1} c' \Wbar + \left(\DeltaL\right)^{-1} V,
\]
where $(\DeltaL)^{-1}: L^{p/2}(M, TM) \to W^{2, p/2}(M, TM)$ denotes
the inverse of $\DeltaL$.
Injecting into the first equation, we get
\[
 c' \int_M \phi_0 Fd\mu^g - 2 c_0^{-N-1} \int_M \phi_0^{-N-1} \big\<\sigmatil + \bL_g \Wtil, \bL_g \left(N c_0^{N-1} c' \Wbar + \left(\DeltaL\right)^{-1} V\right)\big\>_g d\mu^g = d.
\]
This equation can be solved for $c'$ if and only if
\[
 \left(\frac{n-1}{n} \int_M \tau^2 \phi_0^N d\mu^g - \int_M |\bL_g \Wbar|^2 d\mu^g\right) c_0^N \neq \int_M \big\<\sigmatil, \bL_g \Wtil\big\> \phi_0^{-N} d\mu^g
\]
where we used Equation \eqref{eqC}. This condition is to be expected,
it means that $c_0^N$ is not a double root of Equation \eqref{eqC} seen as a second
order equation in $c_0^N$. We have thus proven the following proposition:

\begin{proposition}\label{propSmallLambda}
Under the assumption \eqref{eqCondition}, there exist $\lambda_0 > 0$ and
a continuous curve of solutions $(c_\lambda, \psi_\lambda, \Wtil_\lambda)$
to the system \eqref{systemS}.
\end{proposition}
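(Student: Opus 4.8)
The plan is to read the curve off from the implicit function theorem, treating $\lambda$ as the parameter and $(c,\psi,\Wtil)$ as the unknowns, and expanding around the solution $(c_0,\psi_0,\Wtil_0)$ of the limit system \eqref{systemS0} furnished by Proposition \ref{propVanishingLambda}. Solving \eqref{systemS} amounts to solving $\Phi_\lambda(c,\psi,\Wtil)=0$; the map $\Phi$ has already been checked to be $C^1$ on $\Omega\times W^{2,p/2}(M,TM)$, and $\Phi_0(c_0,\psi_0,\Wtil_0)=0$ by Proposition \ref{propVanishingLambda}, so the one thing left to verify is that the partial differential of $\Phi_0$ in the unknowns at $(c_0,\psi_0,\Wtil_0)$, i.e. the block operator displayed in \eqref{eqDiffPhi}, is a Banach space isomorphism.

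First I would use the shape of \eqref{eqDiffPhi}: the two entries of its first column below the diagonal vanish, so the operator is block lower-triangular for the splitting of the unknown into $\psi'$ on one side and $(c',W')$ on the other, hence invertible as soon as both diagonal blocks are. The $(1,1)$ block is the conformal Laplacian $-\frac{4(n-1)}{n-2}\Delta_g+\scal$, an isomorphism $\Wring^{2,p/2}(M,\bR)\to\Lring^{p/2}(M,\bR)$ by the Fredholm facts recalled in Section \ref{secImplicit} (its kernel and cokernel, both $\bR\phi_0$, having been removed on passing to these $L^2$-orthogonal complements). For the lower-right $2\times2$ block I would repeat the computation already carried out around \eqref{eqSystem}: solve its vector equation by $W'=Nc_0^{N-1}c'\,\Wbar+(\DeltaL)^{-1}V$, using that $\DeltaL$ is invertible (Proposition \ref{propVector}), substitute into its scalar equation, and obtain a single scalar linear equation for $c'$, uniquely solvable exactly when $c_0^N$ is a \emph{simple} root of the quadratic \eqref{eqC} in $c^N$. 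Under \eqref{eqCondition} the leading coefficient of that quadratic, $\frac{n-1}{n}\int_M\tau^2\phi_0^N\,d\mu^g-\int_M\frac{|\bL_g\Wbar|_g^2}{4\eta^2}\phi_0^{-N}\,d\mu^g$, is positive while its constant term $-\int_M|\sigmatil|_g^2\phi_0^{-N}\,d\mu^g$ is negative, so the two roots have opposite signs and the unique admissible positive root is automatically simple; this is precisely the non-degeneracy condition appearing above. Hence that block, and with it the full differential, is an isomorphism.

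With invertibility in hand, I would invoke the implicit function theorem — in the form that asks only for joint continuity of $\Phi$ and of its differential in the unknowns, together with invertibility of that differential at the base point — to produce $\lambda_0>0$ and a continuous map $[0,\lambda_0)\ni\lambda\mapsto(c_\lambda,\psi_\lambda,\Wtil_\lambda)$, unique near $(c_0,\psi_0,\Wtil_0)$, with $\Phi_\lambda(c_\lambda,\psi_\lambda,\Wtil_\lambda)=0$. It then only remains to keep the curve inside $\Omega$: at $\lambda=0$ the function $\phitil$ reduces to $c_0\phi_0$, which is bounded below by a positive constant since $\phi_0>0$ is continuous on the compact manifold $M$, while $(\lambda,c_\lambda,\psi_\lambda)\mapsto\phitil=c_\lambda\phi_0+\lambda^{N-2}\psi_\lambda$ is continuous into $W^{2,p/2}(M,\bR)\hookrightarrow C^0(M)$, so $\phitil>0$ after possibly shrinking $\lambda_0$. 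This produces the announced continuous family of solutions of \eqref{systemS}.

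I expect the only genuinely delicate point — already reflected in the statement, which claims merely a \emph{continuous} curve rather than a $C^1$ one — to be the choice of implicit function theorem: since $\lambda\mapsto\lambda^{N-2}$ is not $C^1$ at $\lambda=0$ once $N-2<1$ (that is, for $n>6$), there is no hope in general of upgrading the curve to a differentiable family, so one must use the parametrized version requiring differentiability only in the unknowns. All the substantial analytic work — coercivity, elliptic regularity and the non-degeneracy packaged into \eqref{eqCondition} — has already been done in the preceding pages.
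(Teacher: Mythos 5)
Your proposal follows the paper's route essentially verbatim: set up the implicit function theorem for $\Phi_\lambda(c,\psi,\Wtil)=0$, exploit the block structure of \eqref{eqDiffPhi} (zeros below the $(1,1)$ entry) to reduce invertibility to the conformal Laplacian on $\Wring^{2,p/2}$ and the lower $2\times 2$ block, solve the vector equation for $W'$, substitute, and identify solvability of the resulting scalar equation for $c'$ with $c_0^N$ being a simple root of the quadratic \eqref{eqC}. You add two genuine precisions the paper glosses over and that strengthen its argument: first, that simplicity of the positive root is \emph{automatic} under \eqref{eqCondition} with $\sigmatil\neq 0$ (the quadratic in $c_0^N$ has positive leading coefficient and negative constant term, so its roots have opposite signs); second, that for $n>6$ the map $\lambda\mapsto\lambda^{N-2}$ fails to be $C^1$ at $\lambda=0$, so one must use the version of the implicit function theorem requiring only continuity in the parameter and $C^1$ dependence in the unknowns, which is exactly why the proposition claims a merely continuous, not $C^1$, curve.
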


Using now the rescaling presented at the beginning of the section, we have
proven the first theorem of this paper:

\begin{theorem}\label{thmImplicit}
 Given $(M, g, \tau, \eta)$ and $\sigmatil \in L^p(M, S_2M)$, $\sigmatil \neq 0$,
where the regularities are indicated at the beginning of the section,
and assuming further that
\[
 \frac{n-1}{n} \int_M \tau^2 \phi_0^N d\mu^g > \int_M \frac{|\bL_g \Wbar|_g^2}{4\eta^2} \phi_0^{-N} d\mu^g,
\]
 there exists a $\lambda_0 > 0$ such that the system \eqref{system}
with $\sigma \equiv \lambda \sigmatil$ has at least a solution $(\phi, W) \in W^{2, p/2}(M, \bR) \times W^{2, p/2}(M, TM)$
for all $\lambda \in (0, \lambda_0)$.
\end{theorem}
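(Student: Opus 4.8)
The plan is to obtain Theorem \ref{thmImplicit} as a direct consequence of Proposition \ref{propSmallLambda}, by unwinding the two successive rescalings of Section \ref{secImplicit}. First I would observe that the inequality assumed in the theorem is literally condition \eqref{eqCondition}, so Proposition \ref{propSmallLambda} provides a $\lambda_0 > 0$ and a continuous family $(c_\lambda, \psi_\lambda, \Wtil_\lambda) \in \bR_+ \times \Wring^{2,p/2}(M,\bR) \times W^{2,p/2}(M,TM)$, $\lambda \in [0,\lambda_0)$, solving the system \eqref{systemS}, with $(c_0,\psi_0,\Wtil_0)$ the solution from Proposition \ref{propVanishingLambda}. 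Set $\phitil_\lambda \definedas c_\lambda\phi_0 + \lambda^{N-2}\psi_\lambda$; this lies in $W^{2,p/2}(M,\bR)$ since $\phi_0$ and $\psi_\lambda$ do, and it is strictly positive precisely because $(\lambda, c_\lambda, \psi_\lambda)$ lies in the set $\Omega$.

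Next I would check that a solution of \eqref{systemS} is the same thing as a solution of \eqref{system} after rescaling. The point is that the conformal Laplacian $u \mapsto -\frac{4(n-1)}{n-2}\Delta_g u + \scal u$ takes values in $\mathring{L}^{p/2}(M,\bR)$: by formal self-adjointness $\int_M\big(-\frac{4(n-1)}{n-2}\Delta_g u + \scal u\big)\phi_0\, d\mu^g = \int_M u\big(-\frac{4(n-1)}{n-2}\Delta_g\phi_0 + \scal\phi_0\big)\, d\mu^g = 0$. Therefore the $\mathring{L}^2$-part of the rescaled Lichnerowicz equation is exactly \eqref{eqLichnerowiczS}, its $\phi_0$-part is exactly \eqref{eqOrthoS}, and these together with the vector equation \eqref{eqVectorS} say that $(\phitil_\lambda,\Wtil_\lambda)$ solves the system into which \eqref{system} turns under the substitutions $\phi = \lambda\phitil$, $W = \lambda^N\Wtil$, $\sigma = \lambda^N\sigmatil$ — this is the computation carried out just before \eqref{systemS}, in which the $c_\lambda\phi_0$ part of $\phitil_\lambda$ drops out of the conformal Laplacian because $-\frac{4(n-1)}{n-2}\Delta_g\phi_0 + \scal\phi_0 = 0$. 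Reversing those substitutions, $\phi \definedas \lambda\phitil_\lambda > 0$ and $W \definedas \lambda^N\Wtil_\lambda$, which are respectively in $W^{2,p/2}(M,\bR)$ and $W^{2,p/2}(M,TM)$, solve \eqref{system} with $\sigma = \lambda^N\sigmatil$ for every $\lambda \in (0,\lambda_0)$.

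Finally I would reparametrize to match the form of the statement: putting $\mu \definedas \lambda^N$, the map $\lambda \mapsto \mu$ is a homeomorphism of $(0,\lambda_0)$ onto $(0,\lambda_0^N)$, so the family above delivers, for each $\mu \in (0,\lambda_0^N)$, a solution of \eqref{system} with $\sigma = \mu\sigmatil$; renaming $\mu$ as $\lambda$ and $\lambda_0^N$ as $\lambda_0$ gives the theorem. The hard analytic content sits entirely in Propositions \ref{propVanishingLambda} and \ref{propSmallLambda} — especially the fact that $c_0^N$ is a simple positive root of \eqref{eqC}, which is what makes the lower $2\times 2$ block of the differential \eqref{eqDiffPhi} invertible and lets the implicit function theorem run. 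For the present theorem the only things requiring care are tracking the exponents of $\lambda$ through the two rescalings and noting that membership in $\Omega$ is exactly the condition that ensures $\phi > 0$.
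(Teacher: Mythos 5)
Your proposal is correct and takes essentially the same route as the paper: the paper's proof of Theorem~\ref{thmImplicit} is just the one-sentence remark that one should undo the rescaling after applying Proposition~\ref{propSmallLambda}, and you have correctly filled in the bookkeeping — that membership in $\Omega$ gives $\phitil_\lambda>0$, that the $\mathring{L}^2$/$\phi_0$-splitting of the rescaled Lichnerowicz equation reproduces \eqref{eqLichnerowiczS}--\eqref{eqOrthoS}, and that the reparametrization $\mu=\lambda^N$ recovers the linear dependence $\sigma=\lambda\sigmatil$ stated in the theorem.
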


\section{A small TT-tensor argument}\label{secSmallTT}
In this section, we will require stronger regularity for $\tau$ than in
the previous section. Namely, we assume that, for some $p > n$,
\begin{itemize}
 \item $g \in W^{2, p/2}(M, S_2M)$,
 \item $\eta \in W^{1, p}(M, \bR)$, $\eta > 0$,
 \item $\sigma \in L^p(M, S_2M)$,
 \item $\tau \in W^{1, t}(M, \bR)$, where $t > t_0$ with
\begin{equation}\label{eqDefT0}
 t_0 = \frac{2n(n-1)}{3n-2}.
\end{equation}
\end{itemize}
The reason why we have to impose stronger regularity for $\tau$ will become
apparant in the course of the proof. We also take advantage of the fact that
the CTS method is conformally covariant (see \cite{MaxwellConformalMethod})
to enforce the condition $\scal \equiv 0$. In particular
$\phi_0 \equiv 1$. We will also assume that $(M, g)$ has volume one:
\[
 \vol(M, g) = 1.
\]
So if $p \leq q$,
\[
 \|f\|_{L^p(M, \bR)} \leq \|f\|_{L^q(M, \bR)}
\]
for any measurable function. This can be achieved by rescaling the metric
by some (constant) factor.

We prove an analog of the result in \cite{Nguyen, GicquaudNguyen},
namely the existence of a solution to the system \eqref{system} when
$\sigma$ is small in $L^2(M, \bR)$.
To keep expressions short, we adopt at some points a probabilistic
notation and denote
\[
 \bE[f] \definedas \int_M f d\mu^g
\]
and
\[
 \bE_\tau[f] \definedas \frac{1}{\bE[\tau^2]} \bE[\tau^2 f] = \frac{1}{\int_M \tau^2 d\mu^g}\int_M \tau^2 f d\mu^g
\]
for any function $f$ for which this makes sense (e.g. $f \in L^{\frac{2t}{2t-1}}$).
The strategy is similar to the previous ones in \cite{HNT1,HNT2,MaxwellNonCMC,Nguyen,GicquaudNguyen}.
The previous section suggests that we have to decompose
$\phi$ as $\phi = c + \mathring{\phi}$ where $c$ is a constant and $\mathring{\phi}$
has zero average. Yet, for technical reasons, it appears more
interesting not to decompose $\phi$ itself but $\phi^N$ and to
do it in a way that involves $\tau^2$.
For any function space $X$, we set
\[
 \breve{X} \definedas \{f \in X, \bE_\tau[f] = 0\}.
\]
Let $p_0$ be defined as follows:
\begin{equation}\label{eqDefP0}
 \frac{1}{p_0} = \frac{2}{p} - \frac{1}{t}
\end{equation}
Given $c_{\max} > 0$ and $r > 0$ to be chosen later, we let
$C_0 \subset L^{p_0}(M, \bR)$ be the following subset
\begin{equation}\label{eqDefC}
 C_0 \definedas \{u \in L^{p_0}(M, \bR),~u \geq 0,\bE_\tau[u] \leq c_{\max} \text{ and } \|u - \bE_\tau[u]\|_{L^{\frac{N}{2}+1}(M, \bR)} \leq r\}.
\end{equation}
The reason why we work in the Lebesgue $L^{\frac{N}{2}+1}$-norm will become
apparant later. We construct a mapping $\Psi: C_0 \to L^{p_0}(M, \bR)$
as follows:
\begin{enumerate}
 \item Given $u = c + \psi \in C_0$ ($c \in \bR$ and $\psi \in \breve{L}^{p_0}(M, \bR)$),
we let $W \in W^{2, p/2}(M, TM)$ be the unique solution to the following equation
\begin{equation}\label{eqDefPsi1}
 \DeltaL W = \frac{n-1}{n} u \nabla\tau,
\end{equation}
see Proposition \ref{propVector}. Note that $p_0$ is chosen so that the
right hand side belongs to $L^{p/2}(M, TM)$.
So, according to the notation introduced in \eqref{eqDefWbar}, we have
\[
 W = c \Wbar + W_\psi,
\]
where $W_\psi$ denotes the solution to \eqref{eqDefPsi1} with $u$ replaced by $\psi$.
 \item We next solve the Lichnerowicz equation for $\phi > 0$ with the $W$
we found in the first step. The solution $\phi \in W^{2, p/2}(M, \bR)$ ($\subset L^\infty(M, \bR)$)
is known to exist from Proposition \ref{propLichnerowicz}.
 \item Finally, we set $\Psi(u) = \phi^N = c' + \psi'$ where $c' \in \bR$
and $\psi' \in \breve{L}^{p_0}(M, \bR)$.
\end{enumerate}
$\Psi$ is the composition of three continuous mappings and, hence,
continuous and its fixed points correspond to solutions of the system
\eqref{system}. Our first aim is to show that, if $\|\sigma\|_{L^2(M, S_2M)}$ is small
enough, we can adjust $c_{\max}$ and $r$ so that $\Psi(C_0) \subset C_0$.

To estimate $\phi$, we multiply the Lichnerowicz equation \eqref{eqLichnerowicz} by $\phi^{N+1}$
and integrate over $M$:

\[
 -\frac{4(n-1)}{n-2} \int_M \phi^{N+1} \Delta \phi d\mu^g + \frac{n-1}{n} \int_M \tau^2 \phi^{2N} d\mu^g = \int_M \left|\sigma + \frac{\bL W}{2 \eta}\right|^2 d\mu^g.
\]
Integrating by parts the first term, we have
\begin{align*}
-\frac{4(n-1)}{n-2} \int_M \phi^{N+1} \Delta \phi d\mu^g
 &= \frac{4(n-1)}{n-2} \int_M \left\<d\phi^{N+1}, d\phi\right\> d\mu^g\\
 &= \frac{4(n-1)}{n-2} (N+1) \int_M \left\<\phi^N d\phi, d\phi\right\> d\mu^g\\
 &= \frac{4(n-1)}{n-2} (N+1) \int_M \left\<\phi^{N/2} d\phi, \phi^{N/2}d\phi\right\> d\mu^g\\
 &= \frac{4(n-1)}{n-2} \frac{N+1}{\left(\frac{N}{2}+1\right)^2} \int_M \left|d\left(\phi^{\frac{N}{2}+1}\right)\right|^2 d\mu^g\\
 &= \frac{3n-2}{n-1} \int_M \left|d\left(\phi^{\frac{N}{2}+1}\right)\right|^2 d\mu^g.
\end{align*}
So the estimate for $\phi$ reads
\begin{equation}\label{eqEstimateLich}
 \frac{3n-2}{n-1} \int_M \left|d\left(\phi^{\frac{N}{2}+1}\right)\right|^2 d\mu^g + \frac{n-1}{n} \int_M \tau^2 \phi^{2N} d\mu^g = \int_M \left|\sigma + \frac{\bL W}{2 \eta}\right|^2 d\mu^g.
\end{equation}

We prove the following variant of the Sobolev inequality:

\begin{lemma}\label{lmSobolev}
 There exists a constant $s = s(M, g, \tau)> 0$ such that for any function $f \in W^{1, 2}(M, \bR)$
we have
\begin{equation}\label{eqSobolev}
 \left\|f - \bE_\tau[f]\right\|^2_{L^N(M, \bR)} \leq s \left\|df\right\|_{L^2(M, \bR)}^2.
\end{equation}
\end{lemma}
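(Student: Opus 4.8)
The plan is to derive \eqref{eqSobolev} from the classical Poincar\'e--Sobolev inequality on the compact (connected) manifold $(M,g)$ by comparing the weighted average $\bE_\tau[f]$ with the ordinary average $\bE[f] = \int_M f\,d\mu^g$. First I would record the two standard ingredients: the Sobolev embedding $W^{1,2}(M,\bR) \hookrightarrow L^N(M,\bR)$ supplies constants $A,B$ with $\|h\|_{L^N(M,\bR)}^2 \le A\|dh\|_{L^2(M,\bR)}^2 + B\|h\|_{L^2(M,\bR)}^2$ for all $h \in W^{1,2}(M,\bR)$, and the ordinary Poincar\'e inequality supplies a constant $C_P$ with $\|h - \bE[h]\|_{L^2(M,\bR)} \le C_P\|dh\|_{L^2(M,\bR)}$. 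Applying both to $h = f - \bE[f]$ (whose differential is $df$ and whose ordinary mean vanishes because $\vol(M,g)=1$) produces a constant $C_0 = C_0(M,g)$ with
\[
 \left\|f - \bE[f]\right\|_{L^N(M,\bR)} \le C_0 \left\|df\right\|_{L^2(M,\bR)} \qquad \text{for all } f \in W^{1,2}(M,\bR).
\]

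Next I would estimate the scalar $\bE[f] - \bE_\tau[f]$. Since $\bE_\tau[1] = 1$ one has $\bE_\tau[f] - \bE[f] = \bE_\tau\big[f - \bE[f]\big] = \tfrac{1}{\bE[\tau^2]}\int_M \tau^2\,(f-\bE[f])\,d\mu^g$, whence by H\"older's inequality
\[
 \big|\bE_\tau[f] - \bE[f]\big| \le \frac{\|\tau^2\|_{L^{N/(N-1)}(M,\bR)}}{\bE[\tau^2]}\,\left\|f - \bE[f]\right\|_{L^N(M,\bR)} .
\]
This step needs $\tau^2 \in L^{N/(N-1)}(M,\bR)$, which is where the regularity threshold of the section is used: $L^{N/(N-1)} = L^{2n/(n+2)}$, so $\tau^2 \in L^{N/(N-1)}$ amounts to $\tau \in L^{4n/(n+2)}$, and this follows by Sobolev embedding from $\tau \in W^{1,t}$ as soon as $t \ge \tfrac{4n}{n+6}$; one checks $t_0 = \tfrac{2n(n-1)}{3n-2} > \tfrac{4n}{n+6}$ since that inequality reduces to $(n-2)(n+1)>0$. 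As $\tau \not\equiv 0$ we also have $\bE[\tau^2] > 0$, so the bound is meaningful.

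Finally I would combine the two estimates. Writing $f - \bE_\tau[f] = \big(f - \bE[f]\big) + \big(\bE[f] - \bE_\tau[f]\big)$, using that the second summand is constant with $\|1\|_{L^N(M,\bR)} = \vol(M,g)^{1/N} = 1$, the triangle inequality gives
\[
 \left\|f - \bE_\tau[f]\right\|_{L^N(M,\bR)} \le \left(1 + \frac{\|\tau^2\|_{L^{N/(N-1)}(M,\bR)}}{\bE[\tau^2]}\right) C_0 \left\|df\right\|_{L^2(M,\bR)} ,
\]
and squaring yields \eqref{eqSobolev} with an explicit constant $s = s(M,g,\tau)$. The only genuinely delicate point is the integrability $\tau^2 \in L^{N/(N-1)}$, i.e. verifying that $t_0$ is chosen large enough; everything else is bookkeeping. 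A softer, non-constructive alternative would be a compactness argument: a normalized sequence $f_k$ with $\bE_\tau[f_k]=0$, $\|f_k\|_{L^N(M,\bR)}=1$ and $\|df_k\|_{L^2(M,\bR)}\to 0$ is bounded in $W^{1,2}(M,\bR)$ (again using $\vol(M,g)=1$), hence converges in $L^q(M,\bR)$ for $q<N$, along a subsequence, to a constant which $\bE_\tau[f_k]=0$ forces to be $0$; but the zero-mean Poincar\'e--Sobolev inequality forces $\|f_k-\bE[f_k]\|_{L^N(M,\bR)}\to 0$ and hence $|\bE[f_k]|\to 1$, a contradiction. I would prefer the direct route since the quantitative $s$ is what is used in the sequel.
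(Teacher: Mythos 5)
Your proof is correct, but it takes a genuinely different route from the paper's. The paper argues by contradiction: it assumes a sequence $f_k$ violating the inequality exists, normalizes so that $\bE_\tau[f_k]=0$ and $\|f_k\|_{L^2(M,\bR)}=1$, extracts a subsequence converging weakly in $W^{1,2}(M,\bR)$ and strongly in $L^2(M,\bR)$ to a harmonic (hence constant) limit $f_\infty$, and then derives the contradiction $1=\|f_\infty\|_{L^2}$ versus $f_\infty=\bE_\tau[f_\infty]=0$. That argument is soft and produces no explicit constant $s$. Your approach is direct and quantitative: you use the ordinary Poincar\'e--Sobolev inequality $\|f-\bE[f]\|_{L^N(M,\bR)}\le C_0\|df\|_{L^2(M,\bR)}$, then control the scalar difference $\bE[f]-\bE_\tau[f]$ by H\"older, and assemble via the triangle inequality using $\|1\|_{L^N(M,\bR)}=\vol(M,g)^{1/N}=1$. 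This yields an explicit $s$, and also makes transparent exactly what integrability of $\tau$ the lemma needs, namely $\tau^2\in L^{N/(N-1)}(M,\bR)$, i.e. $\tau\in L^{4n/(n+2)}(M,\bR)$; your verification that this threshold is strictly below $t_0=\tfrac{2n(n-1)}{3n-2}$ (reducing to $(n-2)(n+1)>0$) is correct, and in fact it even sits below the cruder consequence $\tau\in L^n(M,\bR)$ noted in the paper. Your closing remark sketching the compactness alternative is essentially the paper's proof, modulo the choice of normalization ($L^N$ versus $L^2$). Both proofs are sound; yours has the mild advantage of being constructive and of isolating the precise regularity hypothesis on $\tau$ used.
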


\begin{proof}
We argue by contradiction and assume that there exists no constant $s > 0$
such that Inequality \eqref{eqSobolev} holds for all $f$. There exists a sequence
$(f_k)_{k \in \bN}$ such that
\[
 \left\|f_k - \bE_\tau[f_k]\right\|^2_{L^N(M, \bR)} \geq k \left\|df_k\right\|_{L^2(M, \bR)}^2
\]
for all $k$. From the Sobolev embedding theorem, there exists $s_0 > 0$ such
that
\[
 \left\|f_k - \bE_\tau[f_k]\right\|^2_{L^N(M, \bR)} \leq s_0 \left( \left\|df_k\right\|_{L^2(M, \bR)}^2 + \left\|f_k - \bE_\tau[f_k]\right\|^2_{L^2(M, \bR)}\right).
\]
As a consequence, we have that
\[
  \left\|f_k - \bE_\tau[f_k]\right\|^2_{L^2(M, \bR)} \geq \frac{k - s_0}{s_0} \left\|df_k\right\|_{L^2(M, \bR)}^2.
\]
By replacing $f_k$ by $f_k - \bE_\tau[f_k]$, we can assume that
$\bE_\tau[f_k] = 0$ for all $k$, and, by rescaling $f_k$, we impose that
$\|f_k\|_{L^2(M, \bR)} = 1$ for all $k$.
We finally also assume that there exists $f_\infty \in W^{1, 2}(M, \bR)$
such that $(f_k)$ converges to $f_\infty$ weakly in $W^{1, 2}(M, \bR)$ and strongly
in $L^2(M, \bR)$. This comes from the fact that $W^{1, 2}(M, \bR)$ is a Hilbert space
(hence reflexive) and that the injection $W^{1, 2}(M, \bR) \into L^2(M, \bR)$
is compact. In particular, we have that, for any $u \in W^{1, 2}(M, \bR)$,
\[
 \left|\int_M \<du, df_k\> d\mu^g\right| \leq \|u\|_{W^{1, 2}(M, \bR)} \|df_k\|_{L^2(M, \bR)} \leq \frac{s_0}{k-s_0}\|u\|_{W^{1, 2}(M, \bR)} \to_{k \to \infty} 0.
\]
Thus,
\[
 \int_M \<du, df_\infty\> d\mu^g = 0
\]
for all $u \in W^{1, 2}(M, \bR)$. Namely, $f_\infty$ is a weak solution to $\Delta f_\infty = 0$
so $f_\infty$ is a constant. Since $\|f_k\|_{L^2(M, \bR)} = 1$ for all $k$ and $f_k \to f_\infty$
strongly in $L^2(M, \bR)$, we have $\|f_\infty\|_{L^2(M, \bR)} = 1$. So $f_\infty \equiv \pm 1$ is
a non-zero constant function. Yet,
\[
 f_\infty = \bE_\tau[f_\infty] = \lim_{k \to \infty} \bE_\tau[f_k] = 0,
\]
a contradiction. This proves the lemma.
\end{proof}
Applying eagerly the previous lemma to Estimate \eqref{eqEstimateLich}, we get:
\begin{equation}\label{eqEstimateLich2}
 \frac{3n-2}{n-1} \frac{1}{s} \left\|\phi^{\frac{N}{2}+1} - \bE_\tau\left[\phi^{\frac{N}{2}+1}\right]\right\|_{L^N(M, \bR)}^2 + \frac{n-1}{n} \int_M \tau^2 \phi^{2N} d\mu^g \leq \int_M \left|\sigma + \frac{\bL W}{2 \eta}\right|^2 d\mu^g.
\end{equation}
But, as we indicated at the beginning of the section, we want to decompose
$\phi^N$ not $\phi^{\frac{N}{2}+1}$! So we need a second lemma:

\begin{lemma}\label{lmMoment}
For any $\beta > 1$, any $\alpha \in (1, 2)$ and any positive function $f$ we have
\begin{equation}\label{eqMoment}
\begin{aligned}
\left\|f^\alpha - \bE_\tau[f^\alpha]\right\|_{L^\beta(M, \bR)}
 &\leq \alpha \bE_\tau[f^\alpha]^{\frac{\alpha-1}{\alpha}} \left[1+ \frac{\|\tau\|_{L^{2\gamma}(M, \bR)}^{2/\alpha}}{\|\tau\|_{L^2(M, \bR)}^{2/\alpha}}\right] \left\|f - \bE_\tau[f]\right\|_{L^{\alpha\beta}(M, \bR)}\\
 & \qquad + \left[1+\frac{\|\tau\|_{L^{2\gamma}(M, \bR)}^2}{\|\tau\|_{L^2(M, \bR)}^2}\right] \left\|f - \bE_\tau[f]\right\|^\alpha_{L^{\alpha\beta}(M, \bR)},
\end{aligned}
\end{equation}
where $\gamma$ satisfies
\[
 \frac{1}{\beta}+\frac{1}{\gamma} = 1.
\]
\end{lemma}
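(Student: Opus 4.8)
The plan is to reduce \eqref{eqMoment} to one elementary pointwise inequality, followed by two uses of Jensen's inequality for the probability measure $d\nu \definedas \|\tau\|_{L^2(M, \bR)}^{-2}\, \tau^2\, d\mu^g$ (with respect to which $\bE_\tau[\cdot]$ is exactly the expectation) and one use of Hölder's inequality; the normalisation $\vol(M, g) = 1$ will be used freely. First I would set $\bar f \definedas \bE_\tau[f]$ and $h \definedas f - \bar f$, so that $\bE_\tau[h] = 0$ and, for every $s \in [0, 1]$, the convex combination $(1-s)\bar f + s f$ is non-negative. Differentiating $s \mapsto ((1-s)\bar f + s f)^\alpha$ in $s$ and integrating over $[0,1]$ gives
\[
 f^\alpha - \bar f^\alpha = \alpha\, h \int_0^1 \big((1-s)\bar f + s f\big)^{\alpha - 1}\, ds .
\]
Since $\alpha - 1 \in (0, 1)$, the function $t \mapsto t^{\alpha - 1}$ is concave and vanishes at the origin, hence subadditive, so that $((1-s)\bar f + s f)^{\alpha-1} \leq \bar f^{\alpha-1} + s^{\alpha-1} |h|^{\alpha-1}$; integrating in $s$, with $\int_0^1 s^{\alpha-1}\, ds = 1/\alpha$, yields the pointwise bound
\[
 \big| f^\alpha - \bar f^\alpha \big| \leq \alpha\, \bar f^{\alpha - 1}\, |h| + |h|^\alpha ,
\]
where the factor $1/\alpha$ has conveniently cancelled the leading $\alpha$ on the $|h|^\alpha$ term.

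Next I would decompose $f^\alpha - \bE_\tau[f^\alpha] = (f^\alpha - \bar f^\alpha) - \bE_\tau[f^\alpha - \bar f^\alpha]$, so that, since $\|1\|_{L^\beta(M, \bR)} = \vol(M, g)^{1/\beta} = 1$,
\[
 \big\| f^\alpha - \bE_\tau[f^\alpha] \big\|_{L^\beta(M, \bR)} \leq \big\| f^\alpha - \bar f^\alpha \big\|_{L^\beta(M, \bR)} + \bE_\tau\big[\,\big| f^\alpha - \bar f^\alpha \big|\,\big] .
\]
For the first summand I would insert the pointwise bound, use the triangle inequality together with $\big\| |h|^\alpha \big\|_{L^\beta(M,\bR)} = \|h\|_{L^{\alpha\beta}(M,\bR)}^\alpha$, the inclusion $\|h\|_{L^\beta} \leq \|h\|_{L^{\alpha\beta}}$ (total mass one, $\beta \leq \alpha\beta$), and the estimate $\bar f^{\alpha-1} = \bE_\tau[f]^{\alpha-1} \leq \bE_\tau[f^\alpha]^{(\alpha-1)/\alpha}$ (Jensen for the convex $t \mapsto t^\alpha$); this already produces the first line of \eqref{eqMoment} with the summand $1$ in the bracket. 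For the second summand I would bound $\bE_\tau\big[\big| f^\alpha - \bar f^\alpha \big|\big] \leq \alpha\, \bar f^{\alpha-1}\, \bE_\tau[|h|] + \bE_\tau[|h|^\alpha]$ and estimate the two $\tau$-moments: Hölder with the conjugate pair $(\gamma, \beta)$ together with $\|\tau^2\|_{L^\gamma(M,\bR)} = \|\tau\|_{L^{2\gamma}(M,\bR)}^2$ gives $\bE_\tau[|h|^\alpha] \leq \big(\|\tau\|_{L^{2\gamma}}^2 / \|\tau\|_{L^2}^2\big)\, \|h\|_{L^{\alpha\beta}}^\alpha$, while the power-mean form of Jensen, $\bE_\tau[|h|] \leq \bE_\tau[|h|^\alpha]^{1/\alpha}$, turns this into $\bE_\tau[|h|] \leq \big(\|\tau\|_{L^{2\gamma}}^2 / \|\tau\|_{L^2}^2\big)^{1/\alpha}\, \|h\|_{L^{\alpha\beta}}$. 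Combining these and again using $\bar f^{\alpha-1} \leq \bE_\tau[f^\alpha]^{(\alpha-1)/\alpha}$ reproduces exactly the two terms of \eqref{eqMoment}, since $h = f - \bE_\tau[f]$.

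I do not expect a genuine obstacle here; the only delicate point is the bookkeeping of exponents. Specifically: one must use the pointwise inequality in the form above so that the $1/\alpha$ from $\int_0^1 s^{\alpha-1}\,ds$ cancels the leading $\alpha$ and leaves constant $1$ in front of $|h|^\alpha$; one must estimate the linear moment $\bE_\tau[|h|]$ via the power-mean inequality rather than a direct Hölder bound, as this is precisely what produces the exponent $2/\alpha$ on the $\tau$-ratio in the first line of \eqref{eqMoment}; and one must note that $\gamma$ being the $\beta$-conjugate is exactly what makes Hölder land on the $L^{\alpha\beta}$-norm of $h$. Everything else is the triangle inequality and the volume normalisation $\vol(M,g)=1$.
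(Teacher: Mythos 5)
Your proof is correct, and it reproduces the constants of \eqref{eqMoment} exactly. The skeleton is the same as the paper's: establish the scalar inequality $|a^\alpha - b^\alpha| \leq \alpha\, b^{\alpha-1}|a-b| + |a-b|^\alpha$ (the paper's Inequality (4.9)/(4.10), your pointwise bound), split $f^\alpha - \bE_\tau[f^\alpha]$ into $(f^\alpha - \bE_\tau[f]^\alpha)$ plus a mean-correction, and close with Jensen ($\bE_\tau[f]^{\alpha-1}\le\bE_\tau[f^\alpha]^{(\alpha-1)/\alpha}$) and H\"older with the $(\gamma,\beta)$ pair. Where you genuinely differ is in two technical choices. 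First, you obtain the scalar inequality via the integral representation $f^\alpha - \bar f^\alpha = \alpha h \int_0^1 (\bar f + sh)^{\alpha-1}\,ds$ and the subadditivity of $t\mapsto t^{\alpha-1}$; the paper instead proves $|x^\alpha-1|\le\alpha|x-1|+|x-1|^\alpha$ by splitting into $x<1$ and $x>1$ and analysing the monotonicity of an auxiliary function, then homogenises. Your derivation is cleaner and self-evidently handles the boundary cases. Second, you bound the mean-correction term as $\bE_\tau[|f^\alpha-\bar f^\alpha|]$ (which dominates $|\bE_\tau[f^\alpha]-\bar f^\alpha|$) by pushing the pointwise bound under $\bE_\tau$ and then using H\"older for $\bE_\tau[|h|^\alpha]$ and the power-mean inequality for $\bE_\tau[|h|]$, whereas the paper first estimates the scalar difference $|\bE_\tau[f^\alpha]^{1/\alpha}-\bE_\tau[f]|$ via Minkowski plus H\"older and then applies the scalar inequality once more to these two numbers. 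Both routes land on the identical right-hand side; yours avoids the Minkowski step and is a bit more transparent about where the $L^{\alpha\beta}$-norm and the ratio of $\tau$-norms each come from.
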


\begin{proof}
Before getting into the proof of the lemma, we state and prove the following inequality:
\begin{equation}\label{eqCrap}
 \forall x \in \bR_+,~|x^\alpha - 1| \leq \alpha |x - 1| + |x-1|^\alpha.
\end{equation}
First assume that $x \in (0, 1)$. Then, since $\alpha > 1$, we have,
\[
 |x^\alpha - 1| = 1-x^\alpha
= \alpha \int_x^1 y^{\alpha-1} dy \leq \alpha \int_x^1 dy = \alpha (1-x)
\leq \alpha |x - 1| + |x-1|^\alpha.
\]
Next, for $x > 1$, the function
\[
 h:y \mapsto \frac{y^{\alpha-1}-1}{(y-1)^{\alpha-1}}
\]
has derivative
\[
 h'(y) = \frac{\alpha-1}{(x-1)^\alpha} \left(1-x^{\alpha-2}\right)
\]
so, since $\alpha \in (1, 2)$, $h$ is increasing on the interval $(1, \infty)$
and tends to $1$ at infinity. Thus, $h(y) \leq 1$ for all $y \in (1, \infty)$.
This inequality can be rewritten
\[
 \alpha y^{\alpha-1} \leq \alpha + \alpha (y-1)^{\alpha-1}.
\]
Integrating from $y=1$ to $y=x$, we obtain Inequality \eqref{eqCrap} for
all $x > 1$. This concludes the proof of Inequality \eqref{eqCrap}.

Assume now that $a, b \in \bR_+$. We set $x = a/b$ in \eqref{eqCrap}
and multiply it by $b^\alpha$. We obtain the homogeneous form of \eqref{eqCrap}:
\begin{equation}\label{eqCrap2}
 \forall a, b \in \bR_+,~|a^\alpha - b^\alpha| \leq \alpha b^{\alpha-1}|a - b| + |a-b|^\alpha.
\end{equation}

We now prove Inequality \eqref{eqMoment}. We first compare $\bE_\tau[f^\alpha]$
with $\bE_\tau[f]^\alpha$. It follows from Jensen's inequality that
\[
 \bE_\tau[f]^\alpha \leq \bE_\tau[f^\alpha].
\]
For the opposite direction, we use Minkowski's inequality:
\begin{align*}
\bE_\tau[f^\alpha]^{1/\alpha}
 &= \bE_\tau[\left|f - \bE_\tau[f] + \bE_\tau[f]\right|^\alpha]^{1/\alpha}\\
 &\leq \bE_\tau[\left|f - \bE_\tau[f]\right|^\alpha]^{1/\alpha} + \bE_\tau[\bE_\tau[f]^\alpha]^{1/\alpha}\\
 &\leq \bE_\tau[\left|f - \bE_\tau[f]\right|^\alpha]^{1/\alpha} + \bE_\tau[f]\\
 &\leq \bE_\tau[\left|f - \bE_\tau[f]\right|^\alpha]^{1/\alpha} + \bE_\tau[f].
\end{align*}
We then use H\"older's inequality:
\begin{align*}
\bE_\tau[\left|f - \bE_\tau[f]\right|^\alpha]^{1/\alpha}
 &= \frac{1}{\bE[\tau^2]^{1/\alpha}} \bE[\tau^2 \left|f - \bE_\tau[f]\right|^\alpha]^{1/\alpha}\\
 &\leq \frac{1}{\bE[\tau^2]^{1/\alpha}} \bE[\tau^{2\gamma}]^{1/(\alpha\gamma)} \bE[\left|f - \bE_\tau[f]\right|^{\alpha\beta}]^{1/(\alpha\beta)}.
\end{align*}
As a consequence, we have proven
\[
 \bE_\tau[f]^\alpha \leq \bE_\tau[f^\alpha] \leq \left(\bE_\tau[f] + \frac{\bE[\tau^{2\gamma}]^{1/(\alpha\gamma)}}{\bE[\tau^2]^{1/\alpha}}  \bE[\left|f - \bE_\tau[f]\right|^{\alpha\beta}]^{1/(\alpha\beta)}\right)^\alpha.
\]
In particular,
\[
 \left|\bE_\tau[f^\alpha]^{1/\alpha} - \bE_\tau[f]\right| \leq \frac{\bE[\tau^{2\gamma}]^{1/(\alpha\gamma)}}{\bE[\tau^2]^{1/\alpha}} \left\|f - \bE_\tau[f]\right\|_{L^{\alpha\beta}(M, \bR)}.
\]
We apply Inequality \eqref{eqCrap2} to $u=\bE_\tau[f^\alpha]^{1/\alpha}$
and $v = \bE_\tau[f]$. From the previous inequality, we infer
\begin{equation}\label{eqMean}
\begin{aligned}
\left|\bE_\tau[f^\alpha] - \bE_\tau[f]^\alpha\right|
 &\leq \alpha \left(\bE_\tau[f^\alpha]\right)^{\frac{\alpha-1}{\alpha}} \frac{\bE[\tau^{2\gamma}]^{1/(\alpha\gamma)}}{\bE[\tau^2]^{1/\alpha}} \left\|f - \bE_\tau[f]\right\|_{L^{\alpha\beta}(M, \bR)}\\
 &\qquad + \frac{\bE[\tau^{2\gamma}]^{1/\gamma}}{\bE[\tau^2]} \left\|f - \bE_\tau[f]\right\|_{L^{\alpha\beta}(M, \bR)}^\alpha.
\end{aligned}
\end{equation}

Next, we apply Inequality \eqref{eqCrap2} to the left hand side of \eqref{eqMoment}:
\begin{align*}
\left\|f^\alpha - \bE_\tau[f]^\alpha\right\|_{L^\beta(M, \bR)}
 &\leq \left\|\alpha \bE_\tau[f]^{\alpha-1} \left|f - \bE_\tau[f]\right| + \left|f - \bE_\tau[f]\right|^\alpha\right\|_{L^\beta(M, \bR)}\\
 &\leq \alpha \bE_\tau[f]^{\alpha-1}\left\|f - \bE_\tau[f]\right\|_{L^\beta(M, \bR)} + \left\|\left|f - \bE_\tau[f]\right|^\alpha\right\|_{L^\beta(M, \bR)}\\
 &\leq \alpha \bE_\tau[f^\alpha]^{\frac{\alpha-1}{\alpha}}\left\|f - \bE_\tau[f]\right\|_{L^\beta(M, \bR)} + \left\|f - \bE_\tau[f]\right\|_{L^{\alpha\beta}(M, \bR)}^\alpha.
\end{align*}
Finally, combinig with \eqref{eqMean}, we get
\begin{align*}
\left\|f^\alpha - \bE_\tau[f^\alpha]\right\|_{L^\beta(M, \bR)}
 &\leq \left\|f^\alpha - \bE_\tau[f]^\alpha\right\|_{L^\beta(M, \bR)} + \left|\bE_\tau[f^\alpha] - \bE_\tau[f]^\alpha\right|\\
 &\leq \alpha \bE_\tau[f]^{\alpha-1}\left\|f - \bE_\tau[f]\right\|_{L^\beta(M, \bR)} + \left\|f - \bE_\tau[f]\right\|_{L^{\alpha\beta}(M, \bR)}^\alpha\\
 & \qquad+ \alpha \bE_\tau[f^\alpha]^{\frac{\alpha-1}{\alpha}} \frac{\bE[\tau^{2\gamma}]^{1/(\alpha\gamma)}}{\bE[\tau^2]^{1/\alpha}} \left\|f - \bE_\tau[f]\right\|_{L^{\alpha\beta}(M, \bR)}\\
 & \qquad+ \frac{\bE[\tau^{2\gamma}]^{1/\gamma}}{\bE[\tau^2]} \left\|f - \bE_\tau[f]\right\|^\alpha_{L^{\alpha\beta}(M, \bR)}\\
 &\leq \alpha \bE_\tau[f^\alpha]^{\frac{\alpha-1}{\alpha}} \left(1+ \frac{\bE[\tau^{2\gamma}]^{1/(\alpha\gamma)}}{\bE[\tau^2]^{1/\alpha}}\right) \left\|f - \bE_\tau[f]\right\|_{L^{\alpha\beta}(M, \bR)}\\
 &\qquad + \left(1+\frac{\bE[\tau^{2\gamma}]^{1/\gamma}}{\bE[\tau^2]}\right) \left\|f - \bE_\tau[f]\right\|^\alpha_{L^{\alpha\beta}(M, \bR)}.
\end{align*}
This ends the proof of the lemma.
\end{proof}
In view of Estimate \eqref{eqEstimateLich2}, we choose $f = \phi^{\frac{N}{2}+1}$,
$\alpha = \frac{N}{N/2+1} = \frac{n}{n-1}$ and $\beta = \frac{N}{2}+1$
so $\alpha\beta = N$ and $\gamma = 2\frac{n-1}{n}$. We remind the reader that,
according to our notation, $\bE_\tau[\phi^N] = c'$ and $\psi' = \phi^N-c'$.
We obtain:
\begin{equation}\label{eqMomentPhi}
\begin{aligned}
\left\|\psi'\right\|_{L^\beta(M, \bR)}
 &\leq \alpha (c')^{\frac{\alpha-1}{\alpha}} \left[1+ \frac{\|\tau\|_{L^{2\gamma}(M, \bR)}^{2/\alpha}}{\|\tau\|_{L^2(M, \bR)}^{2/\alpha}}\right] \left\|\phi^{\frac{N}{2}+1} - \bE_\tau[\phi^{\frac{N}{2}+1}]\right\|_{L^N(M, \bR)}\\
 & \qquad + \left[1+\frac{\|\tau\|_{L^{2\gamma}(M, \bR)}^2}{\|\tau\|_{L^2(M, \bR)}^2}\right] \left\|\phi^{\frac{N}{2}+1} - \bE_\tau[\phi^{\frac{N}{2}+1}]\right\|^\alpha_{L^N(M, \bR)}.
\end{aligned}
\end{equation}
Note that $2\gamma = 4\frac{n-1}{n} \leq n$, since, multiplying by $n$,
this inequality is nothing but $(n-2)^2 \geq 0$. As we assumed $\tau \in W^{1, t}(M, \bR) \subset L^n(M, \bR)$,
all norms of $\tau$ appearing in Estimate \eqref{eqMomentPhi} are finite.

Returning to Estimate \eqref{eqEstimateLich2}, remark that
\[
 \frac{n-1}{n} \int_M \tau^2 \phi^N d\mu^g = \frac{n-1}{n} \int_M \tau^2 ((c')^2 + 2 c' \psi' + (\psi')^2) d\mu^g = \frac{n-1}{n} \int_M \tau^2 ((c')^2 + (\psi')^2) d\mu^g
\]
due to our choice of decomposition. As a consequence, Estimate \eqref{eqEstimateLich2}
implies
\begin{equation}\label{eqEstimateLich3}
\left\lbrace
\begin{aligned}
 \frac{3n-2}{n-1} \frac{1}{s} \left\|\phi^{\frac{N}{2}+1} - \bE_\tau\left[\phi^{\frac{N}{2}+1}\right]\right\|_{L^N(M, \bR)}^2 & \leq \int_M \left|\sigma + \frac{\bL W}{2 \eta}\right|^2 d\mu^g,\\
 \frac{n-1}{n} \int_M \tau^2 d\mu^g  (c')^2 & \leq \int_M \left|\sigma + \frac{\bL W}{2 \eta}\right|^2 d\mu^g.
\end{aligned}
\right.
\end{equation}
Hence,
the first line of Estimate \eqref{eqEstimateLich3} together with \eqref{eqMomentPhi} imply
\begin{equation}\label{eqMomentPhi2}
\left\|\psi'\right\|_{L^{\frac{N}{2} + 1}}
 \leq c_1 (c')^{1/n} \left(\int_M \left|\sigma + \frac{\bL W}{2 \eta}\right|^2 d\mu^g\right)^{1/2} + c_2 \left(\int_M \left|\sigma + \frac{\bL W}{2 \eta}\right|^2 d\mu^g\right)^{\frac{n}{2(n-1)}}.
\end{equation}
for some constants $c_1, c_2$ depending only on $(M, g, \tau)$ and $p$.
The right hand side of Estimates \eqref{eqEstimateLich3} can be bounded from above as follows:
\begin{equation}\label{eqEstimateZ}
\begin{aligned}
\int_M \left|\sigma + \frac{\bL W}{2 \eta}\right|^2 d\mu^g
 &= \int_M \left|\sigma + c \frac{\bL \Wbar}{2 \eta} + \frac{\bL W_\psi}{2 \eta}\right|^2 d\mu^g\\
 &= \int_M \left[|\sigma|^2 + 2c \left\<\sigma, \frac{\bL \Wbar}{2 \eta}\right\> + c^2 \left|\frac{\bL \Wbar}{2 \eta}\right|^2\right.\\
 &\qquad\qquad\left. + \left\<\frac{\bL W_\psi}{2 \eta}, 2\sigma + 2c \frac{\bL \Wbar}{2\eta} + \frac{\bL W_\psi}{2 \eta}\right\> \right]d\mu^g\\
 &\leq x^2 + 2 c x A_1 + c^2 A_1^2\\
 &\qquad\qquad + \left\|\frac{\bL W_\psi}{2 \eta}\right\|_{L^2(M, \bR)} \left(2 x + 2 c A_1 + \left\|\frac{\bL W_\psi}{2 \eta}\right\|_{L^2(M, \bR)}\right).
\end{aligned}
\end{equation}
with
\[
 x \definedas \left(\int_M |\sigma|^2 d\mu^g\right)^{1/2},\quad A_1 \definedas \left\|\frac{\bL \Wbar}{2 \eta}\right\|_{L^2(M, \bR)}.
\]

From what we saw above, controlling the $L^2$-norm of the right hand side
in Estimate \eqref{eqEstimateLich3} (which is the best thing we can do if
we insists on imposing restrictions on the $L^2$-norm of $\sigma$ only)
gives no more than an $L^{\frac{N}{2}+1}$-control on $\psi'$. This is why
the restriction for the set $C_0$ only concerned this norm. Moreover,
since $W_\psi$ solves
\[
 \DeltaL W_\psi = \frac{n-1}{n} \psi \nabla\tau,
\]
the best we can say from Proposition \ref{propVector} is that $W_\psi$
is controlled in the $W^{2, q}$-norm for $q = \frac{2n(n-1)}{n^2+2n-4} + O(p-n)$.
From the Sobolev embedding theorem, it follows that
\[
 \frac{\bL W_\psi}{2\eta} \in L^{q'}(M, S_2M)
\]
with $q' = 2 \frac{n(n-1)}{n^2-2} + O(p-n)$. If $p$ is too close to $n$,
we have $q' < 2$. As a consequence, we need to reinforce our assumption
on $\nabla\tau$. To control the $L^2$-norm of $\frac{\bL W\psi}{2\eta}$
we need to impose that $\tau \in W^{1,t_0}(M, \bR)$ where $t_0$ is defined
in \eqref{eqDefT0}.
Indeed, from H\"older's inequality, we then have that
\[
 \left\|\psi \nabla\tau\right\|_{L^v(M, TM)} \leq \|\psi\|_{L^{\frac{N}{2}+1}(M, \bR)} \|\nabla\tau\|_{L^{t_0}(M, TM)}
\]
with $v = 2n/(n+2)$. So, from Proposition \ref{propVector}, there
exists a constant $\Lambda > 0$ so that
\[
 \left\|W_\psi\right\|_{W^{2, v}(M, TM)} \leq \Lambda \|\psi\|_{L^{\frac{N}{2}+1}(M, \bR)} \|\nabla\tau\|_{L^{t_0}(M, TM)}
\]
and, using the Sobolev embedding theorem, we get
\[
 \left\|\frac{\bL W_\psi}{2\eta}\right\|_{L^2(M, \bR)} \leq \Lambda' \|\psi\|_{L^{\frac{N}{2}+1}(M, \bR)} \|\nabla\tau\|_{L^{t_0}(M, TM)}
\]
for some constant $\Lambda' = \Lambda'(M, g, \tau, \eta)$.
For reasons that will become apparant later, we need to impose
$\tau \in W^{1, t}(M, \bR)$ with $t > t_0$. We can now return to
Estimate \eqref{eqEstimateZ}. From what we just saw, we have
\begin{equation}\label{eqEstimateZ2}
\begin{aligned}
\int_M \left|\sigma + \frac{\bL W}{2 \eta}\right|^2 d\mu^g
 &\leq x^2 + 2 c x A_1 + c^2 A_1^2\\
 &\qquad\qquad + \Lambda' \|\psi\|_{L^{\frac{N}{2}+1}} \|\nabla\tau\|_{L^{t_0}} \left(2 x + 2 c A_1 + \Lambda' \|\psi\|_{L^{\frac{N}{2}+1}} \|\nabla\tau\|_{L^{t_0}}\right)\\
 &\leq x^2 + 2 c_{\max} x A_1 + c_{\max}^2 A_1^2 + \Lambda'' r \left(2 x + 2 c A_1 + \Lambda'' r\right),
\end{aligned}
\end{equation}
where we set
\[
 \Lambda'' \definedas \Lambda' \|\nabla\tau\|_{L^{t_0}}.
\]
Defining
\[
 A_0^2 \definedas \frac{n-1}{n} \int_M \tau^2 d\mu^g,
\]
Estimates \eqref{eqEstimateLich3} and \eqref{eqMomentPhi2} imply
\begin{equation}\label{eqEstimateLich4}
\left\lbrace
\begin{aligned}
 A_0^2 (c')^2 &\leq f(x, c_{\max}, r)\\
 \left\|\psi'\right\|_{L^{\frac{N}{2} + 1}(M, \bR)}
 &\leq c_1 c_{\max}^{1/n} f(x, c_{\max}, r)^{1/2} + c_2 f(x, c_{\max}, r)^{\frac{n}{2(n-1)}},
\end{aligned}
\right.
\end{equation}
where
\begin{equation}\label{eqDefF}
 f(x, c_{\max}, r) \definedas x^2 + 2 c_{\max} x A_1 + c_{\max}^2 A_1^2 + \Lambda'' r \left(2 x + 2 c A_1 + \Lambda'' r\right)
\end{equation}
The set $C_0$ introduced in \eqref{eqDefC} will be stable provided that
we choose $c_{\max}$ and $r$ such that
\begin{equation}\label{eqStable}
\left\lbrace
\begin{aligned}
 A_0^2 c_{\max}^2 &\geq f(x, c_{\max}, r)\\
 r &\geq c_1 c_{\max}^{1/n} f(x, c_{\max}, r)^{1/2} + c_2 f(x, c_{\max}, r)^{\frac{n}{2(n-1)}}
\end{aligned}
\right.
\end{equation}
since these conditions immediately imply that $c' \leq c_{\max}$ and $\|\psi'\|_{L^{\frac{N}{2}+1}(M, \bR)} \leq r$.
To find a pair $(c_{\max}, r)$ satisfying \eqref{eqStable}, we set
\[
 c_{\max} = a x,\quad r = b x^{\frac{n}{n-1}}
\]
for some positive constants $a, b$ to be chosen later. This allows to keep
track of the order of magnitude of both components of $\phi^N$ as $x$ tends to zero.
The system \eqref{eqStable} can be rewritten
\begin{equation}\label{eqStable2}
\left\lbrace
\begin{aligned}
 A_0^2 a^2 &\geq (1+a A_1)^2 + O(x^{\frac{1}{n-1}})\\
 b &\geq c_2 (1+a A_1)^{\frac{n}{n-1}} + O(x^{\frac{1}{n-1}}),
\end{aligned}
\right.
\end{equation}
where the big O terms depend on $a$ and $b$.
The idea is now to replace inequalities by equalities and use the
implicit function theorem. Namely, when $x = 0$, the system
\begin{equation}\label{eqStable3}
\left\lbrace
\begin{aligned}
 A_0^2 a^2 &= (1+a A_1)^2\\
 b &= c_2 (1+a A_1)^{\frac{n}{n-1}},
\end{aligned}
\right.
\end{equation}
admits a solution, namely $a_0 = 1/(A_0-A_1)$ and $b_0 = c_2 (1+a_0 A_1)^{\frac{n}{n-1}}$
and the linearization of the system \eqref{eqStable3} has no non-trivial solution.
As a consequence, for small $x$ the system \eqref{eqStable2}
admits a solution in a vicinity of $(a_0, b_0)$.

We pause at this point and summarize what we have proven so far:
\begin{proposition}\label{propSmallTT}
 Assume that $g \in W^{2, p/2}(M, S_2M)$, $\sigma \in L^2(M, \bR)$ and $\tau \in W^{1,t_0}(M, \bR)$,
where $t_0 = \frac{2n(n-1)}{3n-2}$. Then provided that
\[
 x \definedas \left(\int_M |\sigma|^2 d\mu^g\right)^{1/2}
\]
 is small enough, there exist constants $c_{\max} > 0$ and $r > 0$ such
that the set $C_0$ defined in \eqref{eqDefC} is stable for the mapping $\Psi$.
\end{proposition}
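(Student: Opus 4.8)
The statement is a synthesis of the \emph{a priori} bounds established in the computations above, so the plan is simply to assemble them into the scaling argument. First I would fix an input $u=c+\psi\in C_0$, let $(W,\phi)$ be the pair produced by steps (1)--(2) in the construction of $\Psi$, test the Lichnerowicz equation \eqref{eqLichnerowicz} against $\phi^{N+1}$, and integrate by parts, so that the Laplacian term becomes a multiple of $\int_M|d(\phi^{N/2+1})|^2\,d\mu^g$; this produces the identity \eqref{eqEstimateLich}. Discarding the nonnegative term $\tfrac{n-1}{n}\int_M\tau^2\phi^{2N}\,d\mu^g$ and applying Lemma \ref{lmSobolev} to $\phi^{N/2+1}$, and separately expanding $\phi^{2N}=(c'+\psi')^2$ and dropping the nonnegative $(\psi')^2$ term (the cross term $\int_M\tau^2 c'\psi'\,d\mu^g$ vanishing by the very choice of decomposition), I obtain the pair \eqref{eqEstimateLich3} bounding $\|\phi^{N/2+1}-\bE_\tau[\phi^{N/2+1}]\|_{L^N}$ and $c'=\bE_\tau[\phi^N]$ in terms of $Z:=\int_M|\sigma+\bL W/(2\eta)|^2\,d\mu^g$. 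Feeding the first of these into Lemma \ref{lmMoment} with $f=\phi^{N/2+1}$, $\alpha=\tfrac{n}{n-1}$, $\beta=\tfrac{N}{2}+1$ (so $\alpha\beta=N$) turns it into the control \eqref{eqMomentPhi2} of $\|\psi'\|_{L^{N/2+1}}$ by $Z$.

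The only delicate point is to bound $Z$ from above. Writing $W=c\Wbar+W_\psi$, with $\Wbar$ as in \eqref{eqDefWbar} and $W_\psi$ the solution of $\DeltaL W_\psi=\tfrac{n-1}{n}\psi\nabla\tau$, and then expanding the square gives \eqref{eqEstimateZ}: the $\Wbar$-terms are governed by $c\le c_{\max}$ and $x=\|\sigma\|_{L^2}$ through the constant $A_1=\|\bL\Wbar/(2\eta)\|_{L^2}$, while the $W_\psi$-terms must be absorbed by $\|\psi\|_{L^{N/2+1}}\le r$. Here the strengthened assumption on $\tau$ is essential: Proposition \ref{propVector} alone only gives $W_\psi\in W^{2,q}$ with $q$ possibly below $2$ when $p$ is close to $n$, so a direct Sobolev embedding does not place $\bL W_\psi/(2\eta)$ in $L^2$. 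Instead I would use H\"older's inequality $\|\psi\nabla\tau\|_{L^{2n/(n+2)}}\le\|\psi\|_{L^{N/2+1}}\|\nabla\tau\|_{L^{t_0}}$, where $t_0=\tfrac{2n(n-1)}{3n-2}$ is exactly the value making the exponents balance, then elliptic regularity together with $W^{2,2n/(n+2)}\hookrightarrow W^{1,2}$ to conclude $\|\bL W_\psi/(2\eta)\|_{L^2}\le\Lambda'' r$. Substituting this into \eqref{eqEstimateZ} yields $Z\le f(x,c_{\max},r)$ with $f$ as in \eqref{eqDefF}, so that, together with the first step, $\Psi(C_0)\subset C_0$ holds as soon as $(c_{\max},r)$ satisfies the pair of inequalities \eqref{eqStable}.

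It remains to produce such a pair for small $x$, which I would do by the implicit function theorem. With the ansatz $c_{\max}=ax$ and $r=bx^{n/(n-1)}$, which keeps track of the orders of magnitude of the two parts of $\phi^N$, the system \eqref{eqStable} takes the form \eqref{eqStable2}, a perturbation in $x$ of the explicit $x=0$ system \eqref{eqStable3}; the latter has the solution $a_0=1/(A_0-A_1)$, $b_0=c_2(1+a_0A_1)^{n/(n-1)}$ --- here one uses that $A_0>A_1$, which is precisely condition \eqref{eqCondition} in the present normalization $\scal\equiv0$, $\phi_0\equiv1$ --- and, with the inequalities replaced by equalities, the associated $C^1$ map has at $(0,a_0,b_0)$ a linearization that is lower triangular (the first equation fixes $a$, then the second fixes $b$) and hence invertible. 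The implicit function theorem then gives a solution $(a(x),b(x))$ near $(a_0,b_0)$ for all small $x>0$; since it solves the equality system it a fortiori satisfies \eqref{eqStable}, so $c_{\max}=a(x)x$, $r=b(x)x^{n/(n-1)}$ finishes the proof. The one real obstacle here is the $L^2$-only smallness of $\sigma$: it leaves no more than an $L^2$-bound on $\bL W/(2\eta)$, which is exactly why the non-constant part of $\phi^N$ can only be controlled in the $L^{N/2+1}$-norm and why $\nabla\tau$ must be taken in $L^{t_0}$; once that is arranged, the power-counting in \eqref{eqStable2} is routine.
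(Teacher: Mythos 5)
Your proof is correct and follows the paper's argument essentially step by step: the test function $\phi^{N+1}$, Lemmas \ref{lmSobolev} and \ref{lmMoment} with the same exponents $\alpha=n/(n-1)$, $\beta=N/2+1$, the H\"older bound with the critical exponent $t_0$ to put $\bL W_\psi/(2\eta)$ in $L^2$, the ansatz $c_{\max}=ax$, $r=bx^{n/(n-1)}$, and the implicit function argument around the $x=0$ system with $a_0=1/(A_0-A_1)$. You are in fact slightly more careful than the paper at one point: you spell out that $A_0>A_1$ (which is condition \eqref{eqCondition} under the normalization $\scal\equiv0$, $\phi_0\equiv1$) is required for $a_0>0$, a hypothesis the paper uses tacitly but does not list in the statement of the proposition.
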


We are not yet in a position to apply Schauder's fixed point theorem since
$C_0$ is not bounded. So, in what follows, we use a bootstrap argument to
find nested closed subsets $C_k$ (i.e. such that $C_{k+1} \subset C_k$)
so that $\Psi(C_k) \subset C_{k+1}$ eventually getting a bounded closed
set. This point is inspired by \cite[Proposition 4.6]{GicquaudNguyen}.\\

We construct sequences $(q_i)$, $(k_i)$, $(r_i)$, $(R_i)$ as follows.
We choose $q_0 = \frac{N}{2}+1$. There exists a constant $R_0 > 0$
such that
\[
 C_0 \subset \{u \in L^{q_0}(M, \bR), \|u\|_{L^{q_0}(M, \bR)} \leq R_0\}.
\]
Assume now that for some $i \geq 0$, $q_i$ and $R_i$ are knowns
(we just defined $q_0$ and $R_0$). Then, from Young's inequality,
we have that for all $u \in C_i$,
$\left\|u \nabla\tau\right\|_{L^{c_i}(M, \bR)} \lesssim R_i$ where $c_i$ satisfies
$\frac{1}{c_i} = \frac{1}{q_i} + \frac{1}{t}$. Here, the notation
$A \lesssim B$ means that there exists a constant $C > 0$
that may vary from line to line but independent of $u$ such that
$A \leq CB$.

By Proposition \ref{propVector}, we have
\[
 \left\|W\right\|_{W^{2, c_i}} \lesssim R_i
\]
for all $W$ solving \eqref{eqDefPsi1}. From the
Sobolev embedding theorem, we get that
\[
 \left\|\frac{\bL W}{2\eta}\right\|_{L^{r_i}(M, S_2M)} \lesssim R_i
\]
where $r_i$ is given by
\[
 \frac{1}{r_i} = \frac{1}{c_i} - \frac{1}{n} = \frac{1}{q_i} + \frac{1}{t} - \frac{1}{n}.
\]
We now multiply the Lichnerowicz equation by $\phi^{N+1+2 k_i}$
for some $k_i$ to be chosen later and integrate over $M$. We get
\[
\frac{4(n-1)}{n-2} \int_M \left\< d \phi^{N+1+2 k_i}, d\phi\right\> d\mu^g
 \leq \int_M \left|\sigma + \frac{\bL W}{2\eta}\right|^2 \phi^{2 k_i} d\mu^g,
\]
or, equivalently,
\begin{equation}\label{eqBoundGradient}
\frac{4(n-1)}{n-2} \frac{N+1+2 k_i}{\left(\frac{N}{2}+1+k_i\right)^2} \int_M \left| d \phi^{\frac{N}{2}+1+ k_i}\right|^2 d\mu^g
 \leq \int_M \left|\sigma + \frac{\bL W}{2\eta}\right|^2 \phi^{2 k_i} d\mu^g.
\end{equation}
Using H\"older's inequality, we have that
\begin{align*}
\int_M \left|\sigma + \frac{\bL W}{2\eta}\right|^2 \phi^{2 k_i} d\mu^g
 &\leq \left\|\sigma + \frac{\bL W}{2\eta}\right\|^2_{L^{r_i}(M, S_2M)} \left\|\phi^{2 k_i}\right\|_{L^{\frac{r_i}{r_i-2}}(M, \bR)}\\
 &\leq \left\|\sigma + \frac{\bL W}{2\eta}\right\|^2_{L^{r_i}(M, S_2M)} \left\|\phi^N\right\|^{\frac{2 k_i}{N}}_{L^{\frac{2 k_i}{N}\frac{r_i}{r_i-2}}(M, \bR)}.
\end{align*}
We now choose $k_i$ such that
\[
 \frac{2 k_i}{N}\frac{r_i}{r_i-2} = q_i,
\]
namely,
\[
 k_i = (N-1) q_i - N \left(\frac{q_i}{t}+1\right).
\]
We apply the Sobolev embedding theorem: for some constant $s_i$,
we have that
\begin{align*}
\left(\int_M \left| \phi^{\frac{N}{2}+1+ k_i}\right|^N d\mu^g\right)^{2/N}
 &\leq s_i \int_M \left|\sigma + \frac{\bL W}{2\eta}\right|^2 \phi^{2 k_i} d\mu^g + \int_M \phi^{N+2+2k_i} d\mu^g,\\
\left\|\phi^N\right\|^{N+2+2k_i}_{L^{\frac{N}{2}+1+k_i}(M, \bR)}
 &\leq s_i \left\|\sigma + \frac{\bL W}{2\eta}\right\|^2_{L^{r_i}(M, S_2M)} \left\|\phi^N\right\|^{\frac{2 k_i}{N}}_{L^{q_i}(M, \bR)} + \left\|\phi^N\right\|_{L^{1 + \frac{2}{N} +\frac{2k_i}{N}}(M, \bR)}^{1 + \frac{2}{N}+ \frac{2k_i}{N}}.
\end{align*}
A straightforward calculation shows that $1 + \frac{2}{N} + \frac{2 k_i}{N} < q_i$,
so, since $g$ has volume one,
\[
\left\|\phi^N\right\|^{N+2+2k_i}_{L^{\frac{N}{2}+1+k_i}(M, \bR)}
 \leq s_i \left\|\sigma + \frac{\bL W}{2\eta}\right\|^2_{L^{r_i}(M, S_2M)} \left\|\phi^N\right\|^{\frac{2 k_i}{N}}_{L^{q_i}(M, \bR)} + \left\|\phi^N\right\|_{L^{q_i}(M, \bR)}^{1 + \frac{2}{N}+ \frac{2k_i}{N}}.
\]
Setting
\begin{equation}\label{eqRecurrence}
 q_{i+1} = \frac{N}{2}+1+ k_i = \left(N-1 - \frac{N}{t}\right) q_i - \frac{2}{n-2},
\end{equation}
we obtain that
\[
\left\|\phi^N\right\|^{N+2+2k_i}_{L^{q_{i+1}}(M, \bR)}
 \leq s_i \left\|\sigma + \frac{\bL W}{2\eta}\right\|^2_{L^{r_i}(M, S_2M)} \left\|\phi^N\right\|^{\frac{2 k_i}{N}}_{L^{q_i}(M, \bR)} + \left\|\phi^N\right\|_{L^{q_i}(M, \bR)}^{1 + \frac{2}{N}+ \frac{2k_i}{N}}.
\]
Since we know that $\phi^N \in C_i$, we have $\|\phi^N\|_{L^{q_i}(M, \bR)} \leq R_i$
it is then immediate that
\[
 \|\phi^N\|_{L^{q_{i+1}}} \leq R_{i+1}
\]
for some well chosen $R_{i+1}$ as we have bounded all the terms of the right
hand side.

Setting $C_{i+1} \definedas \{u \in C_i, \|u\|_{L^{q_{i+1}}(M, \bR)} \leq R_{i+1}\}$,
we have that $C_{i+1} \subset C_i$ and $\Psi(C_i) \subset C_{i+1}$.

We now study in more details the sequence $(q_i)$. It is defined by the
recurrence relation \eqref{eqRecurrence}. Let $\qbar$ denote the solution
to
\[
 \qbar = \left(N-1 - \frac{N}{t}\right) \qbar - \frac{2}{n-2},
\]
namely
\[
 \qbar = \frac{2}{n-2} \frac{t}{(N-2)t-N}.
\]
We have
\[
 q_i = \left(N-1 - \frac{N}{t}\right)^i (q_0 - \qbar) + \qbar.
\]
If $t \geq t_0$, we have
\[
 N-1 - \frac{N}{t} \geq \frac{n}{n-1} > 1.
\]
Yet if $t = t_0$, where $t_0$ is defined in \eqref{eqDefT0}, we have
$\qbar = q_0$ so the sequence $(q_i)$ is constant. This is where we
have to assume that $t > t_0$ to ensure $q_i \to \infty$. There is
an $i_0$ such that $q_{i_0+1} > p_0$. Then $C_{i_0+1}$ is bounded and closed
in $L^{p_0}(M, \bR)$.

Even more is true. Assume that $i_0$ has been chosen so that $q_{i_0+1} > \max\{p_0, N\}$.
Set $C \definedas \Psi(C_i)$. We claim
that $C$ is precompact in $L^{p_0}(M, \bR)$. Indeed, performing the analysis
following \eqref{eqBoundGradient} but without using the Sobolev embedding
theorem, we have that, for any $u = \phi^N \in \Psi(C_i)$,
\[
\left\|\phi^{q_{i+1}}\right\|_{W^{1, 2}(M, \bR)} \leq R'
\]
for some constant $R' > 0$. Let us denote $q = q_{i+1}$ for simplicity.

Let $(u_k)_k$, $u_k = \phi_k^N \in \Psi(C_i)$,
be any given sequence. Since $p_0 < q$, we have that
$\lambda \definedas N \frac{p_0}{q} < N$ so the embedding
$W^{1, 2}(M, \bR) \hookrightarrow L^\lambda(M, \bR)$ is compact.
As a consequence, there exists a subsequence $(u_{\omega(k)})_k$ of
$(u_k)_k$ such that $\phi_{\omega(k)}^q \to \phi_\infty^q$ in $L^\lambda(M, \bR)$.
We have $u_\infty = \phi_\infty^N \in L^{p_0}(M, \bR)$ so all we need to
do is to check that
\[
 u_{\omega(k)} \to u_\infty \text{ in } L^{p_0}(M, \bR).
\]
The idea is similar to the one for \eqref{eqMoment}, yet simpler.
Let $h: \bR_+ \to \bR_+$ denote the function
\[
 h(x) \definedas \frac{\left|x^N-1\right|^{p_0}}{\left|x^q-1\right|^\lambda}.
\]
Since we chose $q > N$, we have $\lambda = \frac{N}{q} p_0 < p_0$ and
\[
 h(x) \sim \frac{N^{p_0}}{q^\lambda} |x-1|^{p_0-\lambda}
\]
near $x = 1$. Since $h(x)$ tends to $1$ when $x$ goes to $0$ or to $\infty$,
we conclude that $h$ is bounded on $\bR_+$. Let $A > 0$ be an upper bound for $h$: $h(x) \leq A$
for all $x \in \bR_+$. We have that
\[
 \left|x^N-1\right|^{p_0} \leq A \left|x^q-1\right|^\lambda.
\]
Setting $x = \phi_i/\phi_\infty$ and multiplying by $\phi_\infty^{N p_0} = \phi_\infty^{\lambda q}$,
we have that
\[
 \left|\phi_i^N-\phi_\infty^N\right|^{p_0} \leq A \left|\phi_i^q-\phi_\infty^q\right|^\lambda.
\]
Integrating over $M$, we obtain
\[
 \left\|u_i - u_\infty\right\|_{L^{p_0}(M, \bR)} \leq A^{1/p_0} \left\|\phi_i^q-\phi_\infty^q\right\|_{L^\lambda(M, \bR)}^{\lambda/p_0}
\]
which shows that $u_i \to u_\infty$ in $L^{p_0}(M, \bR)$. We have proven that $C$
is (sequentially) precompact in $L^{p_0}(M, \bR)$.

Set $\Cbar \definedas \overline{\mathrm{conv}(C)}$ be the closed convex hull
of $C$. Then $\Cbar$ is compact, convex and $\Psi(\Cbar) \subset \Psi(C_{i_0}) \subset \Cbar$.
We can now apply the Schauder fixed point theorem to $\Psi$ and $\Cbar$ and
get the following theorem:

\begin{theorem}\label{thmSmallTT}
 Assume that $g \in W^{2, p/2}(M, S_2M)$, $\sigma \in L^p(M, S_2M)$, $\eta \in W^{1, p}(M, \bR)$, $\tau \in W^{1,t}(M, \bR)$
for some $t > t_0$, where $t_0$ is defined in \eqref{eqDefT0}. Then, provided
that
\[
 \int_M |\sigma|^2 d\mu^g
\]
is small enough (as given in Proposition \ref{propSmallTT}), there exists at
least one solution to the system \eqref{system}.
\end{theorem}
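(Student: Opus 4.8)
The plan is to realize the system \eqref{system} as a fixed point equation for the operator $\Psi$ constructed before the statement and then invoke Schauder's theorem. First I would set up $\Psi$ exactly as above: given $u = c + \psi \in C_0$, solve the vector equation $\DeltaL W = \frac{n-1}{n} u \nabla\tau$ by Proposition \ref{propVector}, then the Lichnerowicz equation \eqref{eqLichnerowicz} for $\phi > 0$ with this $W$ by Proposition \ref{propLichnerowicz}, and finally return $\Psi(u) = \phi^N$, split into its $\tau^2$-weighted mean $c'$ and its fluctuation $\psi'$. Each step is continuous, so $\Psi$ is continuous, and a fixed point of $\Psi$ is exactly a solution of \eqref{system}. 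The choice to iterate on $\phi^N$ rather than $\phi$, centered with respect to $\bE_\tau$, is what makes the cross term $\int_M \tau^2 c' \psi'\, d\mu^g$ vanish in the energy identity, which is crucial below.

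The second step is the a priori estimate. Multiplying the Lichnerowicz equation by $\phi^{N+1}$ and integrating by parts gives identity \eqref{eqEstimateLich}; combining it with the weighted Sobolev inequality of Lemma \ref{lmSobolev} and the moment inequality of Lemma \ref{lmMoment} bounds both $(c')^2$ and $\|\psi'\|_{L^{N/2+1}}$ in terms of $x^2 := \int_M |\sigma|^2\, d\mu^g$ and the parameters $c_{\max}, r$ defining $C_0$. Taking $c_{\max} = ax$ and $r = b x^{n/(n-1)}$ converts the stability requirement $\Psi(C_0)\subset C_0$ into an algebraic system in $(a,b)$ which at $x=0$ has the non-degenerate solution $a_0 = 1/(A_0-A_1)$, $b_0 = c_2(1+a_0 A_1)^{n/(n-1)}$; the implicit function theorem then supplies admissible $(a,b)$ for $x$ small, i.e. Proposition \ref{propSmallTT}. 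This is where the smallness of $x$ and the positivity $A_0 > A_1$ (automatic here after normalization) enter.

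The main obstacle is that $C_0$ is controlled only in $L^{N/2+1}$, hence unbounded in $L^{p_0}$, so Schauder does not apply to it directly; since only $\|\sigma\|_{L^2}$ is assumed, this is genuinely the best the energy estimate gives. The remedy is a bootstrap: multiply the Lichnerowicz equation instead by $\phi^{N+1+2k_i}$, bound the right-hand side via Hölder against the $L^{r_i}$-control on $\sigma + \bL W/(2\eta)$ coming from Proposition \ref{propVector} and Sobolev embedding, and iterate. This yields the recurrence $q_{i+1} = (N-1-\tfrac{N}{t})q_i - \tfrac{2}{n-2}$ for the integrability exponents, with nested closed sets $C_{i+1}\subset C_i$ and $\Psi(C_i)\subset C_{i+1}$. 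The factor $N-1-\tfrac{N}{t}$ exceeds $1$ precisely when $t > t_0$ (at $t = t_0$ the sequence is stationary at $q_0$), so after finitely many steps $q_{i_0+1} > \max\{p_0, N\}$ and $C_{i_0+1}$ is bounded and closed in $L^{p_0}$.

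Finally I would upgrade boundedness to precompactness. Rerunning the energy estimate without the Sobolev step gives a uniform $W^{1,2}$-bound on $\phi^{q}$ with $q = q_{i_0+1}$; since $\lambda := N p_0/q < N$, the embedding $W^{1,2}\hookrightarrow L^\lambda$ is compact, so along a subsequence $\phi^q \to \phi_\infty^q$ in $L^\lambda$. The elementary inequality $|x^N-1|^{p_0} \le A\,|x^q-1|^\lambda$ on $\bR_+$ (the quotient being continuous with finite limits at $0$, $1$, $\infty$, using $\lambda < p_0$ at $x=1$), rescaled by $\phi_\infty^{N p_0}$, turns this into $L^{p_0}$-convergence of $\phi^N$. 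Hence $C := \Psi(C_{i_0})$ is precompact in $L^{p_0}$, $\Cbar := \overline{\mathrm{conv}(C)}$ is compact and convex with $\Psi(\Cbar)\subset \Psi(C_{i_0})\subset \Cbar$, and Schauder's fixed point theorem produces $u = \phi^N \in \Cbar$ with $\Psi(u)=u$; unwinding the construction, $(\phi, W)$ solves \eqref{system}. The delicate points are the exponent bookkeeping in the bootstrap (the threshold $t_0$) and making the compactness argument land in $L^{p_0}$ rather than a weaker space.
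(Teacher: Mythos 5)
Your proof follows the paper's argument essentially step by step: the same map $\Psi$ iterating on the $\bE_\tau$-decomposition of $\phi^N$, the same energy identity \eqref{eqEstimateLich} combined with Lemmas \ref{lmSobolev} and \ref{lmMoment}, the same scaling $c_{\max} = ax$, $r = bx^{n/(n-1)}$ and implicit-function-theorem argument for Proposition \ref{propSmallTT}, the same bootstrap recurrence \eqref{eqRecurrence} with the threshold $t > t_0$, the same compactness via the elementary inequality $|x^N-1|^{p_0} \leq A|x^q-1|^{\lambda}$, and Schauder on the closed convex hull.

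One detail is wrong and worth correcting: you assert parenthetically that the positivity $A_0 > A_1$ is ``automatic here after normalization.'' It is not. The normalization chosen at the start of Section \ref{secSmallTT} ($\scal \equiv 0$, $\phi_0 \equiv 1$, $\vol(M,g)=1$) does not control the ratio between $A_0^2 = \frac{n-1}{n}\int_M \tau^2\,d\mu^g$ and $A_1^2 = \int_M \frac{|\bL\Wbar|^2}{4\eta^2}\,d\mu^g$, since $\Wbar$ is driven by $\nabla\tau$ rather than by $\tau$ itself; a highly oscillatory $\tau$ with moderate $L^2$-norm can easily make $A_1$ large. The inequality $A_0 > A_1$ is exactly condition \eqref{eqCondition} written in this gauge, and the proof genuinely uses it: the root $a_0 = 1/(A_0 - A_1)$ of the limiting system \eqref{eqStable3} must be positive, and the only other root, $-1/(A_0+A_1)$, never is. Your reading does match the literal statements of Theorem \ref{thmSmallTT} and Proposition \ref{propSmallTT}, which also omit this hypothesis, but logically it must be assumed (as it is in Theorem \ref{thmImplicit}), and one should say so rather than claim it is free.
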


\providecommand{\bysame}{\leavevmode\hbox to3em{\hrulefill}\thinspace}
\providecommand{\MR}{\relax\ifhmode\unskip\space\fi MR }
\providecommand{\MRhref}[2]{%
  \href{http://www.ams.org/mathscinet-getitem?mr=#1}{#2}
}
\providecommand{\href}[2]{#2}


\end{document}